\newtheorem{theorem}{Theorem}[section]
\newtheorem{corollary}[theorem]{Corollary}
\newtheorem{definition}[theorem]{Definition}
\newtheorem{lemma}[theorem]{Lemma}
\newtheorem{proposition}[theorem]{Proposition}
\newtheorem{remark}[theorem]{Remark}
\def \RR{{\mathcal R}}
\def \LL{{\mathcal L}}
\def \DD{{\mathcal D}}
\newcommand{\Drel}{\mathbin{\mathcal D}}
\newcommand{\Rrel}{\mathbin{\mathcal R}}
\newcommand{\Lrel}{\mathbin{\mathcal L}}
\title{\emph{FLAT COSET DECOMPOSITIONS OF SKEW LATTICES}}
\author[Jo\~ao Pita Costa and Karin Cvetko-Vah]{
Jo\~ao Pita Costa* and Karin Cvetko-Vah** \bigskip \\
* Institut Jo\v zef  Stefan,\\
Jamova Cesta 39, 1000 Ljubljana, Slovenia.\\
joao.pitacosta@ijs.si\bigskip \\
** University of Ljubljana,\\
Faculty of Mathematics and Physics,\\
Jadranska 19, 1000 Ljubljana, Slovenia.\\
karin.cvetko@fmf.uni-lj.si\\
tel. +386 1 476 66 24 fax. 386 1 251 72 81
}
\date{\today}
\begin{document}

\maketitle

\begin{abstract}
Skew lattices are non-commutative generalizations of lattices, and the cosets are the building blocks of skew lattices. 
Every skew lattice embeds into a direct product of a left-handed skew lattice and a right-handed skew lattice. It is therefore natural to consider the flat coset decompositions, i.e. decompositions of a skew lattice into right and left cosets. In the present paper we discuss such decompositions, their properties and the relation to the coset laws for cancellative and symmetric skew lattices.
\end{abstract}


%
%
%
%
%
%


\section*{Introduction}

Skew lattices can be understood either as non-commutative generalizations of lattices or as double bands,  where by a \emph{band} we mean a semigroup of idempotents. Although noncommutative lattices were introduced by Jordan in \cite{jordan}, and studied later by Cornish \cite{cornish}, the systematical study of modern skew lattice theory began in 1989 by Leech's paper \cite{Le89}, where  {Leech's First} and  Second Decomposition Theorems were proven, revealing the coset structure of a skew lattice. (See Section 1 for exact statements of the theorems. Both decomposition theorems are analogues of basic results in the theory of bands).

In addition to the outer structure revealed by the two decomposition theorems, skew lattices also possess an interesting inner structure, its so called coset structure. Already in the 1989  foundational paper \cite{Le89} certain aspects of the coset structure of a skew lattice were introduced; however, it was fully explored  in  \cite{Le93} where Leech studied what he referred to as \textit{the global geometry of skew lattices}.
The coset structure was later used in \cite{Co09a} to characterize certain sub-varieties of skew lattices, and in \cite{Le11b} for the purpose of studying distributivity in skew lattices, an approach  proposed in \cite{JPC12}. 

By Leech's First Decomposition Theorem a skew lattice is a lattice of its Green's $\DD$-classes which form its maximal rectangular subalgebras. The coset structure provides a picture of \emph{how} different  $\DD$-classes are ``glued'' together, thus providing important additional information.  Given a pair of comparable $\DD$-classes $A>B$, each of the two classes induces a partition of the other class, and the blocks of these partitions are called \textit{cosets}. Much of the internal structure of a skew lattice is determined by these coset decompositions introduced in \cite{Le93} that reveal the interplay of pairs of comparable $\DD$-classes.
All of this background is reviewed in Section \ref{Preliminaries}.

A skew lattice is called \emph{right-handed} if Green's relation $\LL$ is trivial (making $\DD=\RR$), and it is called \emph{left-handed} if Green's relation $\RR$ is trivial (making $\DD=\LL$). (See Section \ref{Preliminaries} for precise definitions). 
By Leech's Second Decomposition Theorem any skew lattice $S$ embeds into the direct product of a right-handed skew lattice $S/\LL$ (called the \emph{right factor} of $S$) and a left-handed skew lattice $S/\RR$ (called the \emph{left factor} of $S$). 
A skew lattice is \emph{flat} if it is either right- or left-handed. 
Flat skew lattices thus form examples of skew lattices that are general enough to reveal structural properties of skew lattices. 
Indeed, it was proven in \cite{Ka05b} that a skew lattice satisfies any identity or equational implication satisfied by both its left factor and its right factor.
But what if just one of the factors has such a property? 
This is addressed for certain cases in Section \ref{Flat coset structure}. 

In Section \ref{Flat coset laws} we explore connections of the flat coset structure of  skew lattices to the study of various important properties, such as cancellation and normality. 
This approach enables us to classify certain varieties of skew lattices. 
The results of Section 3 were motivated by the earlier studies of \cite{Ka08},  \cite{Co09a}, \cite{Le93},  \cite{AAA80}, \cite{Co11} and \cite{JPC12}. 

Basic knowledge on the lattice theory and the semigroup theory can be found in  \cite{Gr71} and \cite{Ho76}, respectively.

\section{Preliminaires}\label{Preliminaries}

A \emph{skew lattice} is an algebra $(S;\lor, \land)$, where $\land $ and $\lor $ are idempotent and associative binary operations that satisfy the absorption laws $x\land (x\lor y)=x=(y\lor x)\land x$ and $x\lor (x\land y)=x=(y\land x)\lor x$.
These identities are equivalent to the \emph{absorption dualities}: $x\wedge y= x$ iff $x\vee y=y$; and $x\wedge y=y$ iff $x\vee y=x$.
The \emph{Green's preorders}  $\preceq_R$ and $\preceq_L$ are defined on $S$ by
\begin{eqnarray*}
x\preceq_R y \text{ iff } x=y\land x\text{ iff } y=y\lor x, \\
x\preceq_L y \text{ iff } x=x\land y  \text{ iff } y= x\lor y.
\end{eqnarray*}
The intersection of $\preceq_R$ and $\preceq L$ is the \emph{natural partial order} given by $x \leq y$ if $x\wedge y = x = y\wedge x$ or equivalently $x\vee y = y = y\vee x$. 
Since $\preceq_L \circ \preceq_R = \preceq_R \circ \preceq_L$, the resulting common outcome $\preceq$ is a preorder, called the \emph{natural preorder} given by: 
$x \preceq y$ iff $x\wedge y\wedge x = x$ or equivalently $y\vee x\vee y = y$.
The induced equivalence relations of $\leq$, $\preceq_L$, $\preceq_R$ and $\preceq$ are denoted respectively by $=$ (of course), $\LL$, $\RR$ and $\DD$. 
Thus $x\Lrel y$ iff $x\wedge y=x$ and $y\wedge x=y$, or dually $x\vee y=y$ and $y\vee x=x$; likewise, $x\Rrel y$ iff $x\wedge y=y$ and $y\wedge x=x$,  or dually $x\vee y=x$ and $y\vee x=y$. 
Finally, $x\Drel y$ iff $x\wedge y\wedge x=x$ and $y\wedge x\wedge y=y$, or dually $x\vee y\vee x=x$ and $y\vee x\vee y=y$. 
The relations $\mathcal L$, $\mathcal R$ and  $\mathcal D$ are known as  the \emph{Green's relations}.

Recall that a band is \emph{right [left] regular} if it satisfies the identity $xyx=yx$ [$xyx=xy$]; is \emph{regular} if it satisfies $xyxzx=xyzx$; and is \emph{rectangular} if it satisfies $xyx=x$.
Skew lattices can be seen as double regular bands as the band reducts $(S,\land )$ and $(S,\lor )$ are regular. 
A skew lattice $ S$ is \emph{rectangular} if and only if  its band reducts $(S,\land )$ and $(S,\lor )$ are rectangular or equivalently if
 $x\land y= y\lor x$ holds.

The following two fundamental theorems hold:

\begin{theorem}[\cite{Le89} Leech's First Decomposition Theorem]\label{1decomp}
Let $S$ be a skew lattice. Then $\DD $ is a congruence, $S/\DD $ is the maximal lattice image of $S$ and the $\DD $-classes of $S$ are its maximal rectangular subalgebras. 
\end{theorem}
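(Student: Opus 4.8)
The plan is to deduce all three assertions from the structure theory of bands applied \emph{separately} to the two band reducts $(S,\land)$ and $(S,\lor)$, glued together by the self-duality of the natural preorder. First I would record that $\DD$ does not depend on which reduct is used: since $x\succeq y$ is characterised symmetrically by $y\land x\land y=y$ and by $x\lor y\lor x=x$, and $x\Drel y$ holds precisely when $x\succeq y$ and $y\succeq x$, the relation computed from $(S,\land)$ coincides with the one computed from $(S,\lor)$. Now recall the standard semilattice decomposition of a band: on any band $\DD=\JJ$ is a congruence, the quotient is its maximal semilattice image, and the $\DD$-classes are exactly its maximal rectangular subbands. Applying this to $(S,\land)$ shows $\DD$ is compatible with $\land$; applying it to $(S,\lor)$ --- which, by the previous remark, carries the \emph{same} relation --- shows $\DD$ is compatible with $\lor$. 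Hence $\DD$ is a congruence of the skew lattice $S$. (Alternatively, compatibility can be checked by hand: for $a\Drel b$ one shows $a\land c\Drel b\land c$ and $c\land a\Drel c\land b$ by bounding the relevant products between common elements and invoking the regularity identities of Lemma~\ref{lem_reg}.)

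Next I would identify the quotient. From the two band decompositions, $S/\DD$ is a semilattice under both $\land$ and $\lor$, and the absorption laws descend to the quotient, so $S/\DD$ is a lattice. To see it is the \emph{maximal} lattice image, let $\theta$ be any congruence for which $S/\theta$ is commutative. If $a\Drel b$ then $a\land b\land a=a$ and $b\land a\land b=b$; passing to $S/\theta$ and using commutativity and idempotency gives $\overline a\land\overline b=\overline a$ and $\overline b\land\overline a=\overline b$, whence $\overline a=\overline b$. Thus $\DD\subseteq\theta$, so $\DD$ is the least commutative (equivalently, lattice) congruence and every lattice image of $S$ factors through $S/\DD$.

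Finally I would treat the $\DD$-classes. Fix a class $D$. Since $\DD$ is a congruence and $S/\DD$ is idempotent, $a,b\in D$ force $a\land b$ and $a\lor b$ back into $D$, so $D$ is a sub-skew lattice. Rectangularity is immediate from the characterisation of $\DD$: for $a,b\in D$ we have $a\land b\land a=a$, so $(D,\land)$ satisfies $x\land y\land x=x$ and is a rectangular band, and dually for $(D,\lor)$; equivalently $x\land y=y\lor x$ on $D$. For maximality, note that in \emph{any} rectangular sub-skew lattice $T$ the identity $x\land y\land x=x$ makes all elements pairwise $\DD$-related, so $T$ lies inside a single $\DD$-class of $S$; if $T\supseteq D$ then that class is $D$, forcing $T=D$.

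The one genuinely load-bearing step is the congruence property, and more precisely the claim that the $\land$- and $\lor$-pictures agree. I expect the self-duality of $\succeq$ to be the crucial input there; should one prefer a purely computational route, the main friction is the band manipulation underlying Lemma~\ref{lem_reg}, after which the lattice-image and rectangularity statements follow formally.
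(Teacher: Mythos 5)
The paper never proves this theorem: it is quoted as background, attributed to Leech \cite{Le89}, so there is no in-paper argument to compare yours against line by line. Judged on its own merits, your proof is correct, and it is essentially the classical route that the cited source takes: apply the Clifford--McLean theorem separately to the band reducts $(S,\land)$ and $(S,\lor)$, glue the two pictures via the absorption-law duality (so that $\DD$ computed in either reduct is the same relation), then get maximality of the lattice image by showing $\DD$ is contained in every congruence with commutative quotient, and maximality of the rectangular classes from the identity $x\land y\land x=x$ forcing any rectangular subalgebra into a single $\DD$-class. All four of these steps are sound as written.

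Two cautions. First, the self-duality you lean on --- that $y\land x\land y=y$ is equivalent to $x\lor y\lor x=x$, and hence $\DD_{\land}=\DD_{\lor}$ --- is a genuine lemma requiring the absorption laws (e.g.\ via the elementary dualities $x\land y=x \Leftrightarrow x\lor y=y$ and $y\land x=x \Leftrightarrow y\lor x=y$, which give $\RR_{\land}=\LL_{\lor}$ and $\LL_{\land}=\RR_{\lor}$). This paper asserts it without proof in its preliminaries, so deferring to it is acceptable here, but in a self-contained write-up it is the one thing you would actually have to prove. Second, your parenthetical alternative --- verifying compatibility ``by hand'' using the regularity identities of Lemma \ref{lem_reg} --- risks circularity: regularity of the band reducts of a skew lattice is traditionally derived \emph{after} the decomposition theorems (via the Kimura factorization underlying Theorem \ref{2decomp}), so it should not be used as an ingredient in proving Theorem \ref{1decomp} unless one first proves regularity directly from absorption. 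Your main route avoids this entirely, so the proof stands.
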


\begin{theorem}[\cite{Le89} Leech's Second Decomposition Theorem] \label{2decomp} 
Given a skew lattice $S$, relations $\LL $ and $\RR $ are always congruences.
Moreover, $S/\LL $ is the maximal right-handed image of $S$, $S/\RR $ is the maximal left-handed image of $S$, and the natural projections $S \rightarrow S/\LL$ and $S \rightarrow S/\RR$ jointly yield an isomorphism of $S$ with the fibered product 
$S/\RR \times_{S/\DD } S/\LL = \set{(x,y): x\in S/\RR, y\in S/\LL, p(x)=q(y)}$
where $p:S/\LL \rightarrow S/\DD$ and $q:S/\RR\rightarrow S/\DD$ are the naturally induced homomorphisms.
\end{theorem}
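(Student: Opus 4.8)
The plan is to establish the three assertions in turn: that $\RR$ and $\LL$ are congruences; that the quotients $S/\RR$ and $S/\LL$ are the maximal left- and right-handed images; and that $x \mapsto ([x]_{\RR},[x]_{\LL})$ furnishes the claimed isomorphism. Throughout, the left--right duality of skew lattices (interchanging $\RR$ with $\LL$ and $\land$ with $\lor$) lets me treat only $\RR$ explicitly and read off the $\LL$-statements by symmetry. Since $\RR$ is a Green's relation it is already an equivalence, so in each congruence claim only compatibility with the two operations remains to be checked.

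For the congruence claim I would use that both band reducts $(S,\land)$ and $(S,\lor)$ are regular, together with the two characterizations of $\RR$ recalled in Section~\ref{Preliminaries}: $x\RR y$ iff $x\land y=y$ and $y\land x=x$, and equivalently iff $x\lor y=x$ and $y\lor x=y$. The first description exhibits $\RR$ as Green's $\RR$-relation of the regular band $(S,\land)$, while the second exhibits the very same relation as Green's $\LL$-relation of the regular band $(S,\lor)$. Since in a regular band both $\RR$ and $\LL$ are congruences (classical, cf.\ \cite{Pe73}), the relation $\RR$ is compatible with $\land$ by the first description and with $\lor$ by the second, hence is a congruence of $S$. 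I expect this to be the main obstacle, as it is the only step where regularity is genuinely used; a self-contained alternative is to verify the two compatibilities directly, reducing via transitivity of $\RR$ to a one-sided statement $a\land b\RR a'\land b$ for $a\RR a'$ and collapsing the resulting fourfold meet with Lemma~\ref{lem_reg} (the hypothesis $a\DD a'$ supplying the preorder bounds) and the regularity identity $x\land y\land x\land z\land x=x\land y\land z\land x$.

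Once $\RR$ is a congruence, $S/\RR$ is a skew lattice, and to see it is left-handed I would check that its own $\RR$-relation is trivial. Indeed, if $[x]_{\RR}\RR[y]_{\RR}$ in $S/\RR$ then $(x\land y)\RR y$ and $(y\land x)\RR x$; applying the defining equations of $\RR$ and idempotency gives $x\land y=y$ and $y\land x=x$, i.e.\ $x\RR y$, so $[x]_{\RR}=[y]_{\RR}$. Thus $\RR$ is trivial on $S/\RR$ and $S/\RR$ is left-handed. For maximality, let $\theta$ be any congruence with $S/\theta$ left-handed; since $x\RR y$ is given by equations it is preserved by the canonical map, so $x\RR y$ forces $[x]_{\theta}\RR[y]_{\theta}$, which is trivial in the left-handed $S/\theta$, whence $x\,\theta\,y$. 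Hence $\RR\subseteq\theta$ and $S/\theta$ factors through $S/\RR$, so $S/\RR$ is the maximal left-handed image; the statements for $S/\LL$ are dual.

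Finally I would consider $\phi\colon S\to S/\RR\times_{S/\DD}S/\LL$, $\phi(x)=([x]_{\RR},[x]_{\LL})$. Both coordinate maps are canonical homomorphisms onto quotients by congruences, and each composes with the projection to $S/\DD$ to give $x\mapsto[x]_{\DD}$ (as $\RR,\LL\subseteq\DD$), so $\phi$ lands in the fibered product and is a homomorphism. It is injective because $\phi(x)=\phi(y)$ means $x\RR y$ and $x\LL y$, i.e.\ $x\HH y$, and $\HH=\RR\cap\LL$ is trivial in a skew lattice, forcing $x=y$. It is surjective because a pair $([u]_{\RR},[v]_{\LL})$ lies in the fibered product exactly when $u\DD v$; since $\DD=\RR\circ\LL$, one may choose $w$ with $u\RR w\LL v$, and then $\phi(w)=([u]_{\RR},[v]_{\LL})$. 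Thus $\phi$ is an isomorphism of $S$ onto the fibered product, completing the proof.
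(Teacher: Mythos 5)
The paper states this result as Leech's Second Decomposition Theorem, citing \cite{Le89} without reproducing a proof, so there is no in-paper argument to compare against; your proof is correct and follows exactly the classical route of the cited source. In particular, your three steps---identifying the skew lattice relation $\RR$ as Green's $\RR$ of the regular band $(S,\land)$ and simultaneously as Green's $\LL$ of the regular band $(S,\lor)$ so that the classical regular-band fact yields compatibility with both operations, then checking triviality of $\RR$ on $S/\RR$ and maximality via preservation of the defining equations, and finally the Kimura-style fibered-product argument using $\HH=\RR\cap\LL=\Delta_S$ for injectivity and $\DD=\RR\circ\LL$ for surjectivity---are sound and are essentially Leech's original argument.
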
 

These are clearly respective skew lattice versions of the Clifford-McLean Theorem and Kimura's fundamental theorem on regular bands. 
That $\LL$ and $\RR$ are full congruences is why skew lattices are called regular (as opposed to just the $\vee$-band and $\wedge$-band reducts being regular). 
An important aspect of regularity is expressed in the following lemma.

\begin{lemma} \label{lem_reg} \cite{Co09a}
Let $S$ be a skew lattice and let $x_1,x_2,u,v$ in $S$ be such
that $u\preceq x_i\preceq v$ for $i\in \{1,2\}$. Then
\[x_1\land v\land x_2=x_1\land x_2 \text{ and } x_1\lor u\lor x_2=
x_1\lor x_2.
\]
\end{lemma}

Consider a skew lattice $S$ with $\DD$-classes $A>B$. Given $b\in B$, the subset $A\land b\land A=\{a\land b\land a \,|\, a\in A\}$ of $B$ is said to be a \emph{coset} of $A$ in $B$ (or an \emph{$A$-coset in $B$}). Similarly, a coset of $B$ in $A$ (or a $B$-coset in $A$) is any subset $B\lor a\lor B =\{b\lor a\lor b \,|\, b\in B \}$ of $A$, for a fixed $a\in A$. On the other hand, given $a\in A$, the \emph{image set} of $a$ in $B$ is the set $a\land B\land a = \set{a \land b\land a\,|\,b\in B}=\set{b\in B\,|\,b< a}.$ Dually, given $b\in B$ the set $b\lor A\lor b = \set{a\in A:b<a}$ is the image set of $b$ in $A$. 

\begin{theorem}\cite{Le93} \label{coset_part}
Let $S$ be a skew lattice with comparable $\DD$-classes $A>B$. Then, $B$ is partitioned by the cosets of $A$ in $B$ and the image set of any element $a\in A$ is a transversal of the cosets of $A$ in $B$; dual remarks hold for any $b\in B$ and the cosets of $B$ in $A$ that determine a partition of $A$. 
Moreover, any coset $B\lor a\lor B$ of $B$ in $A$ is isomorphic to any coset $A\land b\land A$ of $A$ in $B$ under a natural bijection $\varphi $ defined implicitly for any $a\in A$ and $b\in B$ by: $x\in B\lor a\lor B$ corresponds to $y\in A\land b\land A$ if and only if $x\geq y$. 
Furthermore, the operations $\land$ and $\lor$ on $A\cup B$ are determined jointly by the above bijections $\varphi$ and the rectangular structure of each $\mathcal D$-class.
\end{theorem}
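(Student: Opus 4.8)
The plan is to reduce the entire statement to a single computational identity about cosets and then read off the four assertions in turn. Throughout I write $a,a_0,a_i$ for elements of $A$ and $b,b'$ for elements of $B$, and I abbreviate $\land$ by juxtaposition. First some preliminaries that cost nothing. Since $\DD$ is a congruence and $S/\DD$ is the maximal lattice image (Theorem \ref{1decomp}), for $a\in A$, $b\in B$ one has $\DD(a\land b)=A\land B=B$, so $A\cup B$ is a primitive sub-skew lattice of $S$ and I may assume $S=A\cup B$. For such $a,b$ I record that $a\succeq b$ (as $A>B$), that $a\land b\land a\leq a$ by idempotency, and that $a\land b\land a\in B$; the last point follows by checking $b\succeq a\land b\land a$ and $a\land b\land a\succeq b$ using $a\succeq b$ together with the regular-band identity $xyxzx=xyzx$ for $\land$. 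Consequently the image set is $a\land B\land a=\{b\in B:b\leq a\}$, since the inclusion $\subseteq$ is the previous remark and $b\leq a$ gives $a\land b\land a=b$.

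The technical heart, and the step I expect to be the main obstacle, is the coset-invariance identity, valid whenever $a\Drel a_0$ and $a\succeq b$,
\[
a\land a_0\land b\land a_0\land a=a\land b\land a,
\]
which I call $(\star)$. I would prove $(\star)$ by the flat reduction afforded by Theorem \ref{2decomp}: since $S$ embeds into $S/\RR\times S/\LL$ and any equational implication satisfied by both the left- and right-handed factors is satisfied by $S$ \cite{Ka05b}, it suffices to verify $(\star)$ when $S$ is left-handed and when $S$ is right-handed. In the left-handed case $\land$ is left-regular ($xyx=xy$) and is left-zero within $A$, so $a\land a_0\land b\land a_0\land a=a\land(a_0\land b)\land a=a\land b\land a$. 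In the right-handed case $\land$ is right-regular ($xyx=yx$) and is right-zero within $A$, and moreover $a\succeq b$ forces $a\land b=b$; collapsing the outer factors reduces the left-hand side to $a_0\land(b\land a)=b\land a=a\land b\land a$.

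Granting $(\star)$, the first two assertions follow quickly. Writing $C(b)=A\land b\land A$, identity $(\star)$ gives $C(c)=C(b)$ whenever $c\in C(b)$, by applying $(\star)$ with $c=a_0\land b\land a_0$ to each generator $a$; hence distinct cosets are disjoint. Since every $b'\in B$ lies in $C(b')$ (take any $a\geq b'$, so $a\land b'\land a=b'$), the cosets partition $B$. For the transversal claim, $a\land b\land a$ exhibits a point of $C(b)$ in the image set of $a$, and if $b_1,b_2\leq a$ lie in one coset then $b_2=a_0\land b_1\land a_0$ for some $a_0$, whence $b_2=a\land b_2\land a=a\land a_0\land b_1\land a_0\land a=a\land b_1\land a=b_1$ by $(\star)$. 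The dual assertions for the $B$-cosets in $A$ are obtained by running the same argument in the order-dual skew lattice (interchanging $\land$ and $\lor$ and reversing $\leq$), using the join half of Lemma \ref{lem_reg} and the dual of $(\star)$.

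Finally I would assemble the coset bijection and the multiplication formulas. Fixing cosets $P=B\lor a\lor B$ in $A$ and $Q=A\land b\land A$ in $B$, the transversal property produces, for each $x\in P$, a unique $y\in Q$ with $x\geq y$; set $\varphi(x)=y$. Injectivity and surjectivity of $\varphi\colon P\to Q$ follow from the two transversal statements (the image set of $y$ in $A$ meets $P$ in exactly one point), and one checks that $P$ and $Q$ are rectangular subalgebras and that $\varphi$ respects $\land$ and $\lor$ — hence is an isomorphism — using $x\geq\varphi(x)$ together with Lemma \ref{lem_reg}. For the last assertion it remains to treat a mixed product, say $x\land y$ with $x\in A$, $y\in B$: the element $x\land y\land x$ is the unique point of $C(y)$ in the image set of $x$, so it is determined by the coset partition, and the identity $x\land y=(x\land y\land x)\land y$ (again verified separately in the flat cases, where it reduces to $xyxy=xy$) expresses $x\land y$ through this coset datum and the rectangular multiplication within $B$. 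The remaining mixed products $y\land x$, $x\lor y$ and $y\lor x$ are handled symmetrically and dually, showing that $\land$ and $\lor$ on $A\cup B$ are recovered from the coset bijections and the rectangular structure of $A$ and $B$.
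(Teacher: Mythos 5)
Your proposal is correct, but note that the paper itself offers no proof of Theorem~\ref{coset_part}: it is quoted as background from Leech's paper \cite{Le93}, where the result is established by direct band-theoretic computations with the regularity of the reducts $(S,\land)$ and $(S,\lor)$ --- the style of argument that this paper packages into Lemma~\ref{lem_reg}. Your route is genuinely different: you reduce the key invariance identity $(\star)$, the homomorphism property of $\varphi$, and the mixed-product formulas to the left-handed and right-handed cases via Leech's Second Decomposition Theorem (Theorem~\ref{2decomp}) together with the transfer principle of \cite{Ka05b}, namely that a skew lattice satisfies every identity or equational implication holding in both of its flat factors. This is legitimate: each statement you transfer is a quasi-identity (the hypotheses $a\Drel a_0$, $a\succeq b$, $x\geq y$ are all equational), and neither Theorem~\ref{2decomp} nor the transfer principle depends on the coset theorem, so there is no circularity. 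What the reduction buys is that the verifications become nearly trivial, since in a flat skew lattice every $\DD$-class is a left-zero or right-zero band; what it costs is reliance on machinery that postdates Leech's theorem, plus two places where you compress real work: the closure of a coset under $\land$ (needed for ``rectangular subalgebra'') and the homomorphism property of $\varphi$ are dispatched with ``one checks''. Both do go through, most cleanly by writing the bijection explicitly as $\varphi(x)=x\land b\land x$ and transferring the resulting quasi-identity $(x_1\land x_2)\land b\land (x_1\land x_2)=(x_1\land b\land x_1)\land (x_2\land b\land x_2)$ to the flat cases, and you should spell that out. Two minor points: your identity $x\land y=(x\land y\land x)\land y$ needs no flat reduction at all, since it is just idempotency of $\land$; and the membership $a\land b\land a\in B$ is immediate from $\DD$ being a congruence, with no $\succeq$ computation required.
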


Let $S$ be a skew lattice with comparable $\DD$-classes $A>B$ and let $y,y'\in B$. Then $A\land y \land A = A\land y' \land A$ iff for all $x\in A$ the equality $x\land y\land x = x\land y'\land x$ holds.
Dual results hold, having a similar statement (cf. \cite{Co09a}).
All cosets and image sets are rectangular sub skew lattices.
The natural isomorphism between cosets $\varphi:B\lor a\lor B \to A\land b\land A$ mentioned in the theorem, and its inverse $\varphi ^{-1}$, are called \emph{coset bijections}.

A skew lattice is said to be \emph{symmetric} if for all $x,y\in S$, $x\land y= y\land x$ holds if and only if $ x\lor y= y\lor x$ holds. 
$S$ is called \emph{upper symmetric} if $x \land y = y \land x$ implies $x \lor y = y \lor x$; and $S$ is called \emph{lower symmetric} if $x \lor y = y \lor x$ implies $x \land y = y \land x$.

Finally, a skew lattice $S$ is called \emph{cancellative} if for all $x,y,z\in S$, 
$z\lor x=z\lor y \text{ and } z\land x=z\land y \text{ imply } x=y,$ and
$x\lor z=y\lor z \text{ and } x\land z=y\land z \text{ imply } x=y.$
Cancellative skew lattices are always symmetric, see \cite{Ka08}. They are also \emph{quasi-distributive} in that their maximal lattice images are distributive. A skew lattice is \emph{upper cancellative} if it is upper symmetric and simply cancellative.
Dually, a skew lattice is \emph{lower cancellative} if it is lower symmetric and simply cancellative.

\begin{theorem}\label{pl_canc}\cite{Co09a}
Let $S$ be a quasi-distributive, symmetric skew lattice. Then $S$ is cancellative iff one (and hence both) of the following equivalent statements hold:
\begin{itemize}
\item[(i)] given any skew diamond $\set{J>A,B>M}$ in $S$ and any $x,x'\in A$, $M\lor x\lor M = M\lor x'\lor M$ holds if and only if $B \lor x\lor B = B \lor x'\lor B$ holds;
\item[(ii)] given any skew diamond $\set{J>A,B>M}$ in $S$ and any $x,x'\in A$, $B\land x \land B=B\land x' \land B$ holds if and only if $J\land x \land J=J\land x' \land J$ holds.
\end{itemize}
\end{theorem}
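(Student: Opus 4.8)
The plan is to prove the single equivalence ``$S$ is cancellative $\iff$ (i)'' for every quasi-distributive symmetric skew lattice, and then obtain ``$S$ cancellative $\iff$ (ii)'' --- and hence (i)$\iff$(ii) --- for free by order-duality. Indeed, passing to the opposite skew lattice $S^{\mathrm{op}}$ (interchanging $\lor$ and $\land$) reverses the order, so in each skew diamond the roles of $J$ and $M$ are swapped while $A,B$ are unchanged; a direct check shows statement (i) for $S^{\mathrm{op}}$ is exactly statement (ii) for $S$. Since cancellativity, symmetry and quasi-distributivity are all self-dual, applying the proved equivalence to $S^{\mathrm{op}}$ delivers the rest. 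Thus it remains only to show that, under the standing hypotheses, cancellativity is equivalent to (i); throughout I would use symmetry through Theorem \ref{pl_sym}, which records how the coset partitions coming from $A$ and $B$ assemble into those coming from $J$ and $M$.

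First I would localise cancellativity to skew diamonds. Given $z\lor x=z\lor y$ and $z\land x=z\land y$, project to the maximal lattice image $S/\DD$ (Theorem \ref{1decomp}); it is distributive by quasi-distributivity and therefore cancellative as a lattice, so $x\Drel y$. Write $A$ for their common $\DD$-class and $B$ for the $\DD$-class of $z$. The sublattice of $S/\DD$ generated by $\overline z$ and $A$ is either a chain or the four-element diamond $\set{J,A,B,M}$ with $J=A\lor B$, $M=A\land B$. In the comparable case the coset bijection of Theorem \ref{coset_part} lets one recover an element of a $\DD$-class from its meet and its join with the fixed element $z$, so cancellativity holds there without any extra hypothesis. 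Everything therefore reduces to the genuine skew diamond $\set{J>A,B>M}$: I must show that $z\lor x=z\lor y$ and $z\land x=z\land y$, with $x,y\in A$ and $z\in B$, force $x=y$ precisely when (i) holds for that diamond.

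The heart of the argument is a dictionary between the two cancellation equations and the coset partitions of $A$. Sandwiching the premises between $x$ and $y$ and simplifying with Lemma \ref{lem_reg} and the absorption laws, I would first extract genuine comparabilities: a common lower bound $m\in M$ with $m\le x,y$ from the meet equation and a common upper bound $j\in J$ with $j\ge x,y$ from the join equation. Since by Theorem \ref{coset_part} the image set of $m$ is a transversal of the cosets of $M$ in $A$ and the image set of $j$ is a transversal of the cosets of $J$ in $A$, each such coset contains at most one element below $j$ and at most one above $m$; hence if $x\ne y$ then $x,y$ lie in distinct $M$-cosets and in distinct $J$-cosets of $A$. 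The crux is then to show that $z\lor x=z\lor y$ nonetheless forces $B\lor x\lor B=B\lor y\lor B$, i.e. $x,y$ share a single coset of $B$ in $J$; granting this, (i) turns ``same $B$-join-coset'' into ``same $M$-coset'', contradicting the distinctness just obtained, so $x=y$ and (i)$\Rightarrow$cancellative. For the converse I would run this backwards: if (i) fails in some diamond, choose $x,x'\in A$ witnessing the failure and use the coset bijections of Theorem \ref{coset_part} together with the rectangular coordinates of $B$ (Figure \ref{receggbox}) to manufacture a $z\in B$ with $z\lor x=z\lor x'$ and $z\land x=z\land x'$ but $x\ne x'$, exhibiting a cancellation failure.

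I expect the main obstacle to be exactly the crux lemma of the previous paragraph: bridging the gap between the \emph{existential}, single-element premise $z\lor x=z\lor y$ and the \emph{universally} quantified coset-equality criterion recalled after Theorem \ref{coset_part} (that $A\land y\land A=A\land y'\land A$ holds iff $x\land y\land x=x\land y'\land x$ for \emph{all} $x$). Converting one well-chosen equation into an equality of whole cosets is where the rectangular structure of the $\DD$-classes, the regularity identities of Lemma \ref{lem_reg}, and symmetry (via Theorem \ref{pl_sym}) must all be combined, and where handedness bookkeeping threatens to proliferate cases. The cleanest way to control this, which I would adopt, is to verify the diamond computation first on the left- and right-handed factors $S/\RR$ and $S/\LL$ of Theorem \ref{2decomp}, where within a $\DD$-class the meet and join collapse to a single coordinate, and then to lift to $S$ using the observation of \cite{Ka05b} that an equational implication holding in both flat factors holds in $S$.
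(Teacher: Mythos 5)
A preliminary caveat: the paper itself contains no proof of this statement --- Theorem \ref{pl_canc} is imported from \cite{Co09a} as a preliminary --- so there is no internal proof to measure your attempt against. The published argument (see \cite{Co09a}, and compare how this paper proves Proposition \ref{upsymcl}) runs through the forbidden-subalgebra characterization of cancellation from \cite{Ka08}, extracting a copy of $\mathbf{NC}_5$ from a failed coset law; your route --- duality between (i) and (ii), localization to skew diamonds via quasi-distributivity, a coset/transversal dictionary, reduction to the flat factors --- is genuinely different and much of it does check out. In particular the duality reduction is sound, the localization via distributivity of $S/\DD$ is sound, and, ironically, the step you single out as the crux is the easiest one of all: $z\lor x=z\lor y$ gives $z\lor x\lor z=z\lor y\lor z$, a common element of $B\lor x\lor B$ and $B\lor y\lor B$, which are cosets of $B$ in $J$ and therefore coincide simply because cosets partition $J$ (Theorem \ref{coset_part}); this is exactly the one-line argument used in the proof of Proposition \ref{upsymcl}.

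There are, however, two genuine gaps. First, you identify ``cancellative'' throughout with the single law $z\lor x=z\lor y$ and $z\land x=z\land y$ imply $x=y$, i.e.\ with \emph{left} cancellativity. Your diamond argument, run in a flat factor (where it does work: in the right-handed case the meet equation gives $n:=z\land x=z\land y$ with $n\leq_{L}x,y$, hence $n\leq x,y$ there; (i) converts the $B$-coset equality into $M\lor x\lor M=M\lor y\lor M$; and transversality of the image set of $n$ forces $x=y$), yields only a one-sided cancellation law --- and which side you get depends on the handedness of the factor. To reach full cancellativity you must invoke the fact, recalled in the preliminaries from \cite{Ka08}, that for \emph{symmetric} skew lattices left, right and full cancellation coincide; your sketch never uses symmetry for this purpose, and without that bridge what is proved is strictly weaker than the theorem. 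Second, the lifting step is mis-founded: condition (i) is not an identity or equational implication --- its hypothesis and conclusion are coset equalities, i.e.\ universally quantified families of equations such as $m\lor x\lor m=m\lor x'\lor m$ for all $m\in M$ --- so the transfer principle of \cite{Ka05b} that you cite does not apply to it. What the lifting actually requires is the correspondence between cosets of $S$ and cosets of its flat factors, which is precisely Lemma \ref{fullcosets-left-right} and Corollary \ref{fullright} of this paper, together with the fact that cancellative skew lattices form a variety \cite{Ka08}, so that cancellativity passes both to and from the quotients $S/\RR$ and $S/\LL$. Relatedly, your claim that in a general skew lattice the meet equation produces a \emph{genuine} common lower bound $m\leq x,y$ is false: one only obtains flat-order bounds ($m\leq_{L}x,y$ or $m\leq_{R}x,y$ according to handedness), which is exactly why the argument must either be run inside the flat factors or be rewritten in $S$ itself using the paper's flat transversality results (Proposition \ref{rightcoset}, Theorem \ref{sidedcosetstructure}) in place of Theorem \ref{coset_part}.
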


Recall from \cite{Ka08} that a skew lattice $S$ is \emph{right cancellative} if for all $x,y,z\in S$ the pair of equalities $x\lor z=y\lor z$ and $x\land z=y\land z$ implies $x=y$. \emph{Left cancellative} skew lattices are defined dually. 
A skew lattice is \emph{simply cancellative} if for all $x,y,z\in S$ the pair of equalities $x\lor z\lor x=y\lor z\lor y$ and $x\land z\land x=y\land z\land y$ implies $x=y$.
Clearly, cancellative skew lattices are the ones that are simultaneously right cancellative and left cancellative. If $S$ is symmetric then right cancellation is equivalent to left cancellation and thus coincides with (full) cancellation. (See \cite{Ka08}.)

Recall that a skew lattice is said to be \emph{normal} if it satisfies the identity $x\land y\land z\land w= x\land z\land y\land w$ and, dually, it is named \emph{conormal} if it satisfies $x\lor y\lor z\lor w= x\lor z\lor y\lor w$, cf. \cite{Le92}.

\begin{lemma}\label{pl_normal}\cite{Co11} 
Let $S$ be a skew lattice. Then $S$ is normal iff for each pair of comparable $\DD $-classes $A>B$ in $S$, the class $B$ is an entire coset of $A$ in $B$. That is, 
\begin{center}
$A\land x\land A=A\land x'\land A$
\end{center}
holds for all $x,x'\in B$.
Dually, $S$ is conormal iff $B\lor x\lor B=B\lor x'\lor B$ holds for all pairs of comparable $\DD $-classes $A>B$ in $S$ and all $x,x'\in A$.
\end{lemma}

\section{Flat coset structure}
\label{Flat coset structure}

A \emph{right coset} of $A$ in $B$ is any set of the form $b\land A=\{b\land a\,|\, a\in A\}$, where $b\in B$. 
Similarly, a \emph{right coset} of $B$ in $A$ is any set of the form $B\lor a$ for $a\in A$.
Left cosets are defined analogously. We say that a coset is \emph{flat} whenever it is a right coset or a left coset. 

\begin{lemma}\label{strg_prop_right}
Given comparable $\DD$-classes $A>B$ in a skew lattice $S$ and $y,y'\in B$, the following are equivalent:
\begin{itemize}
\item[(i)] $y \land A = y' \land A$;
\item[(ii)] $y\land x =  y'\land x$, for all $x\in A$;
\item[(iii)]  $y\land x = y'\land x$, for some $x\in A$;
\item[(iv)]  $y\land x = y'\land x'$, for some $x,x'\in A$.
\end{itemize}
Similar equivalences hold for left cosets of $A$ in $B$, and for left or right cosets of $B$ in $A$.
\end{lemma}

\begin{proof}
Clearly (ii) implies (i) and (iii), each of which implies (iv). Conversely, let (iv) hold and take any $a\in A$. By (iv) there exist $x,x'\in A$ s.t. $y\land x=y'\land x'$. Using Lemma \ref{lem_reg} we get 
\[y\land a=y\land x\land a=y'\land x'\land a=y'\land a
\]
which implies (ii). 
\end{proof}

\begin{lemma}\label{lema2}
The right cosets of $A$ in $B$ partition $B$. In detail, $b\in b\land A$ for all $b\in B$, and for all $x$ in $B$, $x\in b\land A$ is equivalent to $x\land A=b\land A$. Moreover, the partition of $B$ by right cosets of $A$ in $B$ refines its partition by cosets of $A$ in $B$. Similar assertions hold for left cosets of $A$ in $B$ and also for left or right cosets of $B$ in $A$.  
\end{lemma}

\begin{proof}
Given $b\in B$, for all $a\in A$, $b=b\land (b\lor a\lor b)\in b\land A$. Also, if $x=b\land a$ for some $a\in A$, then for all $y\in A$, regularity gives $x\land y=b\land a\land y=b\land y$ so that $x\land A=b\land A$. If the latter holds, $x\in b\land A$ follows from Lemma \ref{strg_prop_right}. Clearly $b\land A=(b\lor a\lor b)\land b\land A\subseteq A\land b\land A$.
\end{proof}

\begin{lemma}\label{fullright}
Let $S$ be a skew lattice with comparable $\DD$-classes $A>B$ and $x,y\in B$, $u,v\in A$.
Then:
\begin{itemize}
\item[(i)] $x\land A=y\land A$ if and only if $A\land x\land A=A\land y\land A$ and $x\Rrel y$;
\item[(ii)] $A\land x=A\land y$ if and only if $A\land x\land A=A\land y\land A$ and $x\Lrel y$;
\item[(iii)] $B\lor u=B\lor v$ if and only if $B\lor u\lor B=B\lor v\lor B$ and $u\Rrel v$; 
\item[(iv)] $u\lor B=v\lor B$ if and only if $B\lor u \lor B=B\lor v\lor B$ and $u\Lrel v$.
\end{itemize}
\end{lemma}

\begin{proof}
Again, we need only prove (i) since (ii)--(iv) then follow from standard dualities. Given $x\land A=y\land A$, clearly $A\land x\land A=A\land y\land A$. Lemmas \ref{strg_prop_right} and \ref{lema2}, also give $x=y\land a$ and $y=x\land a'$ for some $a,a'\in A$, from which $x\Rrel y$ follows. Conversely, given $A\land x\land A=A\land y\land A$ and $x\Rrel y$, regularity implies $x\land A=x\land A\land x\land A=x\land A\land y\land A=x\land y\land A=y\land A$.
\end{proof}

It follows that right and left cosets are $\mathcal R$-classes and $\mathcal L$-classes in full cosets.
To fully grasp the import of this we recall the following result of Kinyon, Leech and Pita Costa  \cite{Le11b} Theorem 4.2.)

\begin{proposition} \label{prop4}
Given comparable $\DD$-classes $A>B$, the partition of $B$ by $A$-cosets is a congruence partition of $B$ and the partition of $A$ by $B$-cosets is a congruence partition of $A$. Thus all $A$-cosets in $B$ are rectangular subalgebras of $B$ and all $B$-cosets in $A$ are rectangular subalgebras of $A$. Finally, all coset bijections between them are isomporphisms of the respective rectangular subalgebras.
\end{proposition}

A primitive skew lattice $A>B$ is thus a union of \emph{binormal} primitive skew lattices $A'>B'$ arising as pairs of cosets $A'$ in $A$ and $B'$ in $B$. The subalgebras $A'\cup B'$ 
of all such pairs are isomorphic; and given any two such pairs $A'>B'$ and $A''>B''$, either $A'=A''$ and $B'\cap B''=\emptyset$, or $B'=B''$ but $A'\cap A''=\emptyset$, or else $A'\cap A''=\emptyset=B'\cap B''$. In detail:

\begin{theorem}\label{th5}
Given $\DD$-classes $A>B$ with $a\in A$ and $b\in B$, the cosets $A'=B\lor a\lor B$ and $B'=A\land b\land A$ form a binormal primitive subalgebra of $A\cup b$ that is isomorphic to $\mathbf 2\times A'$, or equivalently, to $\mathbf 2\times B'$. For this subalgebra $A'\cup B'$ the following hold:
\begin{itemize}
  \item[(i)] Both $\DD$-classes $A'>B'$ are full cosets of the other and the coset bijection from $A'$ to $B'$ is an isomorphism.
  \item[(ii)] The $\RR$-classes of $B'$ are the right cosets $b'\land A$ of $A$ lying in $B'$ and the $\mathcal L$-classes of $B'$ are the left cosets $A\land b'$ of $A$ lying in $B'$. Dually, the $\RR$-classes of $A'$ are the right cosets $B\lor a'$ of $B$ lying in $A'$ while the $\LL$-classes are the left cosets $a'\lor B$ of $B$ lying in $A'$.
  \item[(iii)] All right cosets in $A'\cup B'$ are isomorphic. Given right cosets $B\lor a'$ in $A'$ and $b'\land A$ in $B'$, a natural isomorphism from $B\lor a'$ to $b'\land A$ is given by $x\mapsto b'\land x$. The inverse isomorphism is given by $y\mapsto y\lor a'$. Under this correspondence, $b'\land x$ is the unique element $y$ in $b'\land A$ such that $y\preceq_L x$; inversely $y\lor a'$ is the unique $x$ in $B\lor a'$ such that $y\preceq_L x$. 
  \item[(iv)] Dual remarks hold for left cosets in $(B\lor a\lor B) \cup (A\land b\land A)$.
  \item[(v)] Finally, all left [right, $2$-sided] cosets of $A$ in $B'$ are also $A'$-cosets in $B'$ of the same type and all left [right, $2$-sided] cosets of $B$ in $A'$ are also $B'$-cosets in $A'$ of the same type.
\end{itemize}
\end{theorem}

\begin{proof}
This follows from the previous results, the utter triviality of a binormal primitive skew lattice, the fact that left translations preserve the $\mathcal R$-relation (bijectively in the rectangular and also in the binormal primitive case) and the fact that bijections between $\mathcal R$-classes are isomorphisms.
\end{proof}
\begin{corollary}\label{cor5}
All right cosets in $A\cup B$ are isomorphic, as are all left cosets. Given right cosets $B\lor a$ in $A$ and $b\land A$ in $B$, a natural isomorphism of $B\lor a$ with $b\land A$ is given by $x\mapsto b\land x$; the inverse isomorphism is given by $y\mapsto y\lor a$. In this context, $b\land x$ is the unique element $y$ in $b\land A$ such that $y\preceq_L x$; inversely $y\lor a$ is the unique $x$ in $B\lor a$ such that $y\preceq_L x$.
\end{corollary}

\begin{corollary}\label{cor7}
Let $A>B$ be comparable $\DD$-classes in a skew lattice $S$ and let $y,y'\in A$ and $x,x'\in B$. Then the intersection $(x\land A)\cap (A\land x')$ is nonempty if and only if $A\land x\land A=A\land x'\land A$, in which case $(x\land A)\cap (A\land x')=\{x\land x'\}$.  Dually, $(y\lor B)\cap (B\lor y')$ is nonempty if and only if $B\lor y\lor B=B\lor y'\lor B$, in which case $(y\lor B)\cap (B\lor y')=\{y\lor y'\}$.
\end{corollary}

\begin{proof}
Clearly $A\land x\land A=A\land x'\land A$ is necessary for a pair of their subsets to have nonempty intersection. But when this occurs, $(x\land A)\cap (A\land x')=\RR_x\cap \LL_{x'}=\{x\land x'\}$ in the $\DD$-class $A\land x\land A$ of the primitive subalgebra.
\end{proof}

\begin{corollary}\label{fullright2}
Let $S$ be a skew lattice with comparable $\DD$-classes $A>B$ and $x,y\in B$.
The following statements are equivalent:
\begin{itemize}
\item[(i)] $A\land x\land A=A\land y\land A$; 
\item[(ii)] $A\land x\land y=A\land y$ and $x\land A=x\land y\land A$.
\end{itemize}
The dual result holds for $B$-cosets in $A$.
\end{corollary}

\begin{proof}
The direct implication  is an easy consequence of regularity. E.g., given $(i)$, one has
\[
A\land y=A\land y\land A\land y=A\land x\land A\land y=A\land x\land y
\] 
with regularity being used in the final equality. The converse is trivial.
\end{proof}

\begin{corollary}\label{cosetdirectprod}
Let $S$ be a skew lattice with two comparable $\DD $-classes $A>B$. 
Then, for all $x\in B$, there exists an isomorphism 
\[\delta_{A\land x\land A}:A\land x\land A\rightarrow (A\land x)\times (x\land A).
\]
Dually, for all $y\in A$, there exists an isomorphism 
\[
\delta_{B\lor y\lor B}:B\lor y\lor B\rightarrow (y\lor B)\times (B\lor y).
\]
\end{corollary}

\begin{proof}
We view $A\land x\land A$ as the bottom $\DD$-class of a primitive subalgebra. The corollary asserts that an isomorphism $\delta$ of $A\land x\land A$ with $\LL_x\times \RR_x$ exists. But this is true for any rectangular algebra $(D,\lor, \land)$ and any fixed element $x$ in $D$. Defining $\delta$ by $\delta(x)=(y\land x,x\land y)$ gives a $\land$-isomorphism that is necessarily a $\lor$-isomorphism also. The dual case is similar.
\end{proof}

Given sets $X$, $Y$, $Z$ and $W$  and maps $f:X\rightarrow Z$, $g:Y\rightarrow W$ we denote by $f\times g$ the map from $X\times Y$ to $Z\times W$ that assigns to each pair $(x,y)\in X\times Z$ the pair $(f(x),g(y))$. The following result is an immediate corollary to the Corollary \ref{cosetdirectprod}.

\begin{corollary}\label{inner_kimura} 
Let $S$ be a primitive skew lattice with two comparable $\DD $-classes $A>B$. Given $a\in A$ and $b\in B$, $a>b$, consider the maps:
\[
\begin{array}{rcl}
\varphi_{a,b}: B\lor a\lor B & \to & A\land b\land A \\
x & \mapsto & x\land b\land x
\end{array},
\,
\begin{array}{rcl}
\varphi_{a,b}^L: a\lor B & \to & A\land b\\
x & \mapsto & x\land b
\end{array},
\,
\begin{array}{rcl}
\varphi_{a,b}^R: B\lor a & \to &  b\land A \\
x & \mapsto &  b\land x.
\end{array}
\]
Then the following is a commutative diagram of skew lattice isomorphisms:
\begin{center}
\begin{tikzpicture}
\path (-1.5,1.5) node[] (S) {$B\lor a\lor B$};

\path (-1.5,-1.5) node[] (L) {$A\land b\land A$};

\path (3.5,1.5) node[] (R) {$(a\lor B)\times (B\lor a)$};

\path (3.5,-1.5) node[] (D) {$(A\land b)\times (b\land A)$};

\draw[arrows=-latex'] (S) -- (R) node[pos=.5,above] {$\delta_{B\lor a\lor B}$};
\draw[arrows=-latex'](S) -- (L) node[pos=.5,left] {$\varphi_{a,b}$};
\draw[arrows=-latex'] (L) -- (D) node[pos=.5,above] {$\delta_{A\land b\land A}$};
\draw[arrows=-latex'](R) -- (D) node[pos=.5,right] {$\varphi_{a,b}^L\times \varphi_{a,b}^R$}; 	 
\end{tikzpicture}
\end{center}
\end{corollary}

The assertion of Corollary \ref{inner_kimura} is even true when $a>b$ does not hold.

%
Let $S$ be a skew lattice with two comparable $\DD $-classes $A>B$. Given $b\in B$ the \emph{left image set} of $b$ in $A$ is the set $b\lor A=\{b\lor a\,|\, a\in A\}$; given $a\in A$ the \emph{left image set} of $a$ in $B$ is the set $B\land a=\{b\land a\,|\, b\in B\}$. Right image sets are defined dually. The left image sets $b\lor A$  and $B\land a$ are both contained in $\LL$-classes in their respective $\DD$-classes and $b\preceq_L b\lor a$, $b\land a\preceq_L a$ hold.  The left image set of any element $a\in A$ in $B$ forms a transversal of the family of all right cosets of $A$ in $B$ as $(B\land a)\cap (b\land A)=\{b\land a \}$. Hence all left image sets are equipotent. Moreover, any particular $\LL$-class naturally parametrizes the $\RR$-classes of that $\DD$-class: $\{\RR_x \,|\, x \in \LL_u \}$ for any $u$ in that $\DD$-class. If $S$ is a binormal primitive skew lattice with $\DD$-classes $A>B$ then given $x\in A$ and $y\in B$, $y\preceq_L x$ holds if and only if $y$ lies in the $\LL$-class $\LL_{x\land b\land x}=B\land x$. In this case $x\land b\land x$ is the unique image of $x$ in $B$.


\section{Flat Coset Laws}
\label{Flat coset laws}

The following results show the impact of the flat coset decomposition on the coset laws for cancellative skew lattices and for normal skew lattices.

We shall now turn our attention to normal skew lattices and corresponding coset laws.
The relation with quasi-normality shall also be discussed.

\begin{proposition}\label{cs_normal} 
Let $S$ be a skew lattice. Then $S$ is normal iff for each comparable pair of $\DD $-classes $A>B$ in $S$ and all $x,x'\in B$ the following pair of implications hold:
\begin{itemize}
\item[(i)] if $x\LL x'$ then $A\land x =A\land x'$;
\item[(ii)] if $x\RR x'$ then $x\land A=x'\land A$.
\end{itemize}
Dually, $S$ is conormal iff for all comparable pairs of $\DD $-classes $A>B$ in $S$ and for all $x,x'\in A$ the following pair of implications hold:
\begin{itemize}
\item[(iii)] if $x\RR x'$ then $B\lor x=B\lor x'$;
\item[(iv)] if $x\LL x'$ then $x\lor B =x'\lor B$.
\end{itemize}

\end{proposition}

\begin{proof}
First assume that $S$ is normal. If $x\Lrel x'$ then by Lemma \ref{fullright} and Lemma \ref{pl_normal} we obtain:
\[
A\land x=(A\land x\land A)\cap \LL_{x}=(A\land x'\land A)\cap \LL_{x'}=A\land x'
\]
which proves (i).
The proof for (ii) is similar.

Conversely, assume that (i) and (ii) hold, and let $A>B$ be comparable $\DD $-classes in $S$. 
Take $x,x'\in B$.
As $B$ is rectangular there exists $z\in B$ such that $x\Rrel z$ and $z\Lrel x'$.
By the assumption we have $x\land A=z\land A$ and $A\land z=A\land x'$, and
thus $A\land x\land A=A\land z\land A=A\land x'\land A$.
Hence $S$ is a normal skew lattice by Lemma \ref{pl_normal}.
The statement regarding conormal skew lattices has a similar proof.
\end{proof}

\begin{definition}
A skew lattice is \emph{right quasi-normal} (RQN) if it satisfies the identity $y\land x\land a=y\land a\land x\land a$, and it is \emph{left quasi-normal} (LQN) if it satisfies the identity $a\land x\land y=a\land x\land a\land y$. Equivalently, right [left] quasi-normal skew lattices are the ones for which $(S;\land)$ is a right [left] quasi-normal band. 
These bands are defined in \cite{Pe71}. Dual definitions determine \emph{[left] right quasi conormal} skew lattices.
The following results provide us with useful characterizations of such algebras.
\end{definition}

\begin{remark}
Recall that for regular bands, $S/\RR$ is left normal if and only if $S$ is right quasi-normal (cf. \cite{Pe71} Th. 6).
\end{remark}

\begin{lemma}\label{lemma-leq-ideal}
Let $S$ be a skew lattice and $y,x\in S$. Then  the sets $x\land S$ and $S\land x$ are subalgebras of $S$, and the following hold:
 \begin{enumerate}
   \item $y\preceq_{R} x$ if and only if $y\in x\land S$;
   \item $y\preceq_{L} x$ if and only if $y\in S\land x$.
  \end{enumerate}
\end{lemma}

\begin{proof}
In order to prove (1) let $y\preceq_{R} x$. Then $y=x\land y\in x\land S$. Conversely, assume $y\in x\land S$. Then $y=x\land u$ for some $u\in S$. Thus $x\land y=x\land x\land u=y$ which implies $y\preceq_{R} x$. This proves (1) and (2) follows by a dual argument. It remains to prove that $x\land S$ and $S\land x$ are subalgebras of $S$.
Take $u, v\in x\land S$. By (1) the elements $u$ and $v$ are of the form $u=x\land u$ and $v=x\land v$. Hence 
\[x\land u\land v=u\land v
\]
and thus $u\land v\in x\land S$. By Lemma \ref{lemma-leq-ideal}  in order to prove that also $u\lor v\in x\land S$ it suffices to show that $u\lor v\preceq_{R} x$ which is equivalent to $x=x\lor u\lor v$. This is indeed the case as:
\[x\lor u\lor v=x\lor (x\land u)\lor (x\land v) 
\]
which equals $x$ by absorption.
\end{proof}

\begin{proposition}\label{quasinormal}
A skew lattice $S$ is right quasi-normal if and only if for all $x\in S$ the factor algebra $(x\land S)/\RR$ is a lattice,or equivalently, $S$ is right quasi-normal iff for all $ x\in S$, the subalgebra $x\land S$ is right-handed. 
Dually, $S$ is left quasi-normal if and only if for all $x\in S$ the factor algebra $(S\land x)/\LL$ is a lattice, or equivalently for all $ x\in S$, the subalgebra $S\land x$ is left-handed. 
\end{proposition}

\begin{proof}
Assume that $S$ is right quasi-normal and let $x\in S$, $y,y'\in x\land S$ be such that $y\Lrel y'$. Then $y=x\land y$ and $y'=x\land y'$ by Lemma \ref{lemma-leq-ideal}.
Thus: 
\[
y=x\land y=x\land y\land y'=x\land y'\land y\land y'=x\land y'=y',
\]
 where the second and forth equality follow by $y\Lrel y'$, and the third equality follows by right quasi-normality.

Conversely, assume that $(z\land S)/\RR$ is a lattice for all $z\in S$, and take arbitrary $y,x,a\in S$.  
Consider $x\land S$ that is a subalgebra bx Lemma \ref{lemma-leq-ideal}.
By regularity we have:
\[
(x\land y\land a)\land (x\land a\land y\land a)=(x\land y\land a)\land (x\land y\land a)=x\land y\land a
\]
 and 
 \[
 (x\land a\land y\land a)\land (x\land y\land a)=(x\land a\land y\land a)\land (x\land a\land y\land a)=x\land a\land y\land a.
 \]
Thus $(x\land a\land y\land a)\Lrel (x\land y\land a).$
However, as by the assumption all $\LL$-classes of $x\land S$ are trivial, $x\land y\land a=x\land a\land y\land a$ follows.
The proof of the dual statement is similar. 
\end{proof}

The next result relates Propositions \ref{cs_normal} and \ref{quasinormal}, giving us a characterization for left [right] quasi-normal skew lattices of coset nature.

\begin{proposition}\label{cosetquasinormal}
Let $S$ be a skew lattice. Then,  

\begin{itemize}
\item[(i)] $S$ is left quasi-normal if and only if for all comparable $\DD$-classes $A>B$ in $S$ and $x,x'\in B$ such that $x\Rrel x'$, then $x\land A=x'\land A$. 
\item[(ii)] $S$ is right quasi-normal if and only if for all comparable $\DD$-classes $A>B$ in $S$ and $x,x'\in B$ such that $x\Lrel x'$, then $A\land x=A\land x'$; 
\end{itemize}

Dual results hold for conormality.
\end{proposition}

\begin{proof}
Let $a\in A$ and $x,x'\in B$ such that $x\Rrel x'$.
Due to the hypothesis and the fact that $x\Drel x'$, $x\land a=x'\land x\land a=x'\land x\land x'\land a=x'\land a$ so that $x\land A=x'\land A$ as required by (i). 
Conversely, let $x\in y\land S$ and consider $A=\DD_{y}$.
Let $x'\in S$ such that $x\Rrel x'$ 
Then $x'=x\land x'=y\land x\land x'=y\land x'\in y\land S$.
Then, the hypothesis implies that $A\land y\land x=A\land y\land x'$. 
As $y\in A$ then Lemma \ref{strg_prop_right} implies that 
\[
x=y\land x=y\land y\land x=y\land y\land x'=y\land x'=x'.
\]
Hence, Proposition \ref{quasinormal} implies that $S$ is right quasi-normal.
The proof of (ii) is similar.
\end{proof}

The following result is a consequence of Propositions \ref{cs_normal} and \ref{cosetquasinormal}.
In fact, it also follows the research made for bands of semigroups in \cite{Pe71} when considering the reducts $(S;\wedge)$ and $(S;\vee)$ of a skew lattice $S$. 

\begin{corollary}
Let $S$ be a skew lattice. Then, $S$ is normal if and only if $S$ is simultaneously right quasi-normal and left quasi-normal. 
Dually, $S$ is conormal if and only if $S$ is simultaneously right quasi-conormal and left quasi-conormal. 
\end{corollary}

In the remainder of the paper we will give a further insight to the flat coset decomposition of cancellative skew lattices for which the lattice image is distributive, and therefore the ones permitting the coset laws established in \cite{Co09a}.

\begin{remark}\label{imp}
Recall that given a skew diamond $\set{J>A,B>M}$ and elements $x,x'\in A$, the equality $M\lor x\lor M=M\lor x'\lor M$ always implies $B\lor x\lor B=B\lor x'\lor B$. Likewise, the equality $J\land x\land J=J\land x'\land J$ implies $B\land x\land B=B\land x'\land B$. Proposition \ref{strg_lema_right} below is a flat version of this result.
\end{remark}

\begin{proposition}\label{strg_lema_right}
Let $S$ be a skew lattice and $\set{J>A,B>M}$ a skew diamond in $S$. 
Given any $x,x'\in A$ the following hold:

\begin{itemize}
\item[(i)] if $ M \lor x= M\lor x'$ then $ B \lor x= B\lor x'$;
\item[(ii)] if $ x\land J = x'\land J$ then $x\land B= x'\land B$.
\end{itemize}

Similar remarks hold regarding left cosets.
\end{proposition}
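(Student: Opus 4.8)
### Proof Strategy

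The plan is to prove only part (i), since part (ii) follows by the dual argument (interchanging the roles of $\land$ and $\lor$, and reversing the order of the $\DD$-classes in the diamond). The goal is to show that the equality of left cosets $M\lor x=M\lor x'$ of $M$ in $A$ forces the equality of left cosets $B\lor x=B\lor x'$ of $B$ in $A$, given the skew diamond structure $\set{J>A,B>M}$ with $A,B$ incomparable, $M=A\land B$ as the meet class and $J=A\lor B$ as the join class.

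First I would unpack the hypothesis using the characterizations already available. By Proposition~\ref{rightcoset} (applied in dual form to left cosets), $M\lor x=\set{z\in A: z\geq_{R} m}$-type descriptions are available, but more usefully I would invoke Proposition~\ref{coset-full-right}(iv): the equality $x\lor M=x'\lor M$ (or its left-handed analogue for $M\lor x=M\lor x'$) decomposes into a flat condition on one handedness coordinate together with a full-coset-in-the-other-factor condition. The cleaner route, however, is to mimic the classical fact recalled in Remark~\ref{imp}: the inclusion $M\subseteq B$ as $\DD$-classes (since $M=A\land B<B$) means that any element of $B$ can be approached through $M$. Concretely, since $M<B<J$ and $M<A<J$, for any $b\in B$ there is $m\in M$ with $m\leq b$, and the coset structure of $M$ inside $B$ is controlled by the coset structure already established.

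The key computational step is to show that $B\lor x$ and $B\lor x'$ agree pointwise. I would take an arbitrary $b\in B$ and compute $b\lor x$, aiming to show $b\lor x=b\lor x'$, which by the dual of Proposition~\ref{strg_prop_right} yields $B\lor x=B\lor x'$. The bridge is to insert an element $m\in M$ with $m\leq b$ (equivalently $m\preceq b$, so that $b\lor m\lor b=b$ and $m=b\land m\land b$); such $m$ exists because $M<B$ are comparable $\DD$-classes. Then using Lemma~\ref{lem_reg}, together with $m\preceq x$ and $m\preceq x'$ (valid since $M<A$ as well, so every element of $M$ sits below the appropriate elements of $A$), I can rewrite $b\lor x$ by factoring through $m$: the hypothesis $M\lor x=M\lor x'$ supplies $m\lor x=m\lor x'$ for the relevant $m$, and regularity lets me absorb $b$ around this equality. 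The expected chain is roughly $b\lor x=b\lor m\lor x=b\lor m\lor x'=b\lor x'$, where the outer equalities use Lemma~\ref{lem_reg} and the comparabilities $m\leq b$, $m\preceq x,x'$, and the middle equality is the hypothesis transported to the single element $m$.

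The main obstacle will be justifying that the hypothesis $M\lor x=M\lor x'$ (equality of whole cosets) delivers the pointwise equality $m\lor x=m\lor x'$ for the \emph{specific} $m\leq b$ that I select, rather than merely for some element of $M$. By Proposition~\ref{strg_prop_right} (in its dual, left-coset form), $M\lor x=M\lor x'$ is equivalent to $m\lor x=m\lor x'$ holding for \emph{all} $m\in M$, so this equivalence resolves the difficulty and the chosen $m$ is automatically admissible. The remaining care is purely bookkeeping: confirming the preorder relations $m\preceq x$, $m\preceq x'$ needed to apply Lemma~\ref{lem_reg} (these hold because $M$ is the meet class $A\land B$, so each $m\in M$ lies below elements of $A$ in the natural preorder), and checking that $b\lor m=b$ or the analogous absorption identity is correctly oriented for the handedness of the operation. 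Once these relations are in place, the computation is a direct application of regularity and absorption.
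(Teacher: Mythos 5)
Your proposal is correct and follows essentially the same route as the paper's own proof: both reduce the hypothesis $M\lor x=M\lor x'$ to the pointwise equality $m\lor x=m\lor x'$ via (the dual of) Proposition \ref{strg_prop_right}, insert $m$ into $b\lor x$ using Lemma \ref{lem_reg} to get $b\lor x=b\lor m\lor x=b\lor m\lor x'=b\lor x'$, and conclude $B\lor x=B\lor x'$ by Proposition \ref{strg_prop_right} again. The only cosmetic difference is that you fix an arbitrary $b\in B$ and then choose $m\leq b$, whereas the paper fixes $m$ and chooses $b$ above it; since the pointwise equality holds for all $m\in M$, both orderings of the choices work equally well.
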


\begin{proof}
We will prove (i) having in mind that (ii) follows by a dual argument.
Let $x,x'\in A$ and assume that $M \lor x = M\lor x'$. Lemma \ref{strg_prop_right} implies the existence of $m\in M$ such that $ m \lor x= m\lor x'$. 
Let $b\in B$ be such that $m\preceq_{L} b$. Then, 
\[
\begin{array}{rcl}
b\lor x & = & b\lor m\lor x \\
              & = & b\lor m\lor x'  \\
              & = & b\lor x'.
\end{array}
\] 
Lemma \ref{strg_prop_right} then implies $B\lor x = B\lor x'$.
\end{proof}

\begin{proposition}\label{cosidentities}
Let $\textbf S$ be a skew lattice. 
Then given any skew diamond $\set{J>A,B>M}$ in $\textbf S$ and any $x,x'\in A$ the following equivalences hold:
\begin{itemize}
\item[(i)] ($M\lor x\lor M= M\lor x'\lor M \Leftrightarrow B \lor x \lor B= B \lor x'\lor B$) if and only if ($M\lor x= M\lor x' \Leftrightarrow B \lor x = B \lor x'$ and $x\lor M = x'\lor M\Leftrightarrow x\lor B = x'\lor B$);
\item[(ii)] ($B\land x \land B=B\land x' \land B \Leftrightarrow J\land x \land J=J\land x' \land J$) if and only if ($x \land B=x' \land B \Leftrightarrow x \land J=x' \land J$ and $ B \land x=B \land x' \Leftrightarrow J \land x=J \land x'$).
\end{itemize}
\end{proposition}

\begin{proof}
We will only show (i) as (ii) has an analogous proof. 
By Proposition \ref{strg_lema_right} and the comment above it, all the direct implications of the considered equivalences always hold. So, only the converse implications need to be addressed.
Let $\set{J>A,B>M}$ be a skew diamond in $S$ and $x,x'\in A$. First assume that $M\lor x\lor M= M\lor x'\lor M \Leftrightarrow B \lor x \lor B= B \lor x'\lor B$ holds. If $B \lor x = B \lor x'$ then
 Lemma \ref{fullright} implies $B \lor x \lor B= B \lor x'\lor B$ and $x\Rrel x'$.
Hence $M\lor x\lor M= M\lor x'\lor M$ and $x\Rrel x'$ by the assumption, and thus $M\lor x= M\lor x'$ follows by Lemma \ref{fullright}.

Conversely, assume that both $M\lor x= M\lor x' \Leftrightarrow B \lor x = B \lor x'$ and $x\lor M = x'\lor M\Leftrightarrow x\lor B = x'\lor B$ hold. If $B\lor x\lor B=B\lor x'\lor B$ then by
 Proposition \ref{fullright2} there exists $y\in B\lor x\lor B$ such that $B\lor y=B\lor x$ and $y\lor B=x'\lor B$.
Proposition \ref{strg_lema_right} then implies $M\lor y=M\lor x$ and $y\lor M=x'\lor M$.
Thus $M\lor x\lor M=M\lor y\lor M=M\lor x'\lor M$ follows.
\end{proof}

\begin{proposition}\label{upsymcl}
Let $ S$ be a skew lattice such that $S/\DD$ is a distributive lattice.
\begin{itemize}
\item[(i)] if $S$ is lower symmetric then $ S$ is lower cancellative if and only if $M\lor x\lor M= M\lor x'\lor M \Leftrightarrow B \lor x \lor B= B \lor x'\lor B$ holds for all skew diamonds $\set{J>A,B>M}$ in $ S$ and all $x,x'\in A$.
\item[(ii)] if $S$ is upper symmetric then $ S$ is upper cancellative if and only if given any skew diamond $\set{J>A,B>M}$ in $ S$ and any $x,x'\in A$, $B\land x \land B=B\land x' \land B \Leftrightarrow J\land x \land J=J\land x' \land J$ holds.
\end{itemize}
\end{proposition}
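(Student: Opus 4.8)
The plan is to prove (i) in full and obtain (ii) by the order dual (interchanging $\lor$ with $\land$, $M$ with $J$, and reversing $\geq$). By definition a skew lattice is lower cancellative precisely when it is lower symmetric and simply cancellative, so under the standing hypothesis that $S$ is lower symmetric the assertion ``$S$ is lower cancellative'' reduces to ``$S$ is simply cancellative''. Hence it suffices to show that, for a quasi-distributive lower symmetric $S$, simple cancellation is equivalent to the coset law $M\lor x\lor M=M\lor x'\lor M\Leftrightarrow B\lor x\lor B=B\lor x'\lor B$ ranging over all skew diamonds $\set{J>A,B>M}$ and all $x,x'\in A$. By Remark \ref{imp} (equivalently Proposition \ref{strg_lema_right}) the implication $M\lor x\lor M=M\lor x'\lor M\Rightarrow B\lor x\lor B=B\lor x'\lor B$ holds in every skew lattice, so the coset law collapses to the single converse implication $B\lor x\lor B=B\lor x'\lor B\Rightarrow M\lor x\lor M=M\lor x'\lor M$.

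First I would split this full coset law into its flat components. Since $S/\DD$ is distributive, Proposition \ref{cosidentities}(i) shows that it is equivalent to the conjunction of $M\lor x=M\lor x'\Leftrightarrow B\lor x=B\lor x'$ and $x\lor M=x'\lor M\Leftrightarrow x\lor B=x'\lor B$; and, again by Proposition \ref{strg_lema_right} together with its left dual, each biconditional reduces to its downward half, namely $B\lor x=B\lor x'\Rightarrow M\lor x=M\lor x'$ and $x\lor B=x'\lor B\Rightarrow x\lor M=x'\lor M$. The task thus becomes to match these two one-sided implications with simple cancellation, and it is here that Proposition \ref{cosidentities} does the conceptual work of reducing full cosets to the right/left cosets governed by the machinery of Section \ref{Flat Coset Decomposition}.

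For the direction asserting that the coset law forces simple cancellation, suppose $x\lor z\lor x=y\lor z\lor y$ and $x\land z\land x=y\land z\land y$. Projecting to the distributive lattice $S/\DD$ and cancelling there yields $x\Drel y$; setting $A=\DD_x=\DD_y$, $B=\DD_z$, $J=A\lor B$ and $M=A\land B$ gives a skew diamond (the degenerate case in which $A,B$ are comparable is primitive and handled directly from Theorem \ref{coset_part}). I would then read $x\lor z\lor x=y\lor z\lor y$ as the statement that $x$ and $y$ lie below the common element $j=x\lor z\lor x\in J$, and $x\land z\land x=y\land z\land y$ as the statement that they lie above a common element of $M$; the coset bijections of Theorems \ref{coset_part} and \ref{sidedcosetstructure} convert these two alignments into equalities of the corresponding $B$-cosets in $J$ and $M$-cosets in $A$. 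Once the upper alignment has been recast as $B\lor x\lor B=B\lor y\lor B$, the coset law (i) delivers $M\lor x\lor M=M\lor y\lor M$, and combining this $M$-coset equality with the lower alignment and the injectivity of the coset bijection within a single coset (Theorem \ref{sidedcosetstructure}(iii), sharpened by lower symmetry via Theorem \ref{pl_sym_right}(a),(c)) forces $x=y$.

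Conversely, given simple cancellation and $x,x'\in A$ with $B\lor x\lor B=B\lor x'\lor B$, I would extract linking elements as in Propositions \ref{fullright2} and \ref{exclusive}, assemble from them a pair of elements agreeing in both their $J$-shadow and their $M$-shadow through a suitable $z\in B$, apply simple cancellation to identify those elements, and descend to $M\lor x\lor M=M\lor x'\lor M$ through Corollary \ref{fullright}. The main obstacle, and the only place where the argument genuinely departs from the symmetric Theorem \ref{pl_canc} of \cite{Co09a}, is exactly this element-versus-coset bridge: translating the block-level equalities of (i) into the element-level equalities that simple cancellation consumes, and verifying that merely the lower half of symmetry (rather than full symmetry) is needed to close the loop. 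The quasi-distributive splitting of Proposition \ref{cosidentities} and the one-sided symmetry laws of Theorem \ref{pl_sym_right} are precisely the tools that let the full symmetry invoked in \cite{Co09a} be replaced by lower symmetry for (i) and, dually, by upper symmetry for (ii).
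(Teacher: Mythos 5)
Your opening reduction (under the standing hypothesis of lower symmetry, ``lower cancellative'' means ``simply cancellative'') matches the paper, and your half ``coset law $\Rightarrow$ simple cancellation'' is correct --- and is in fact a genuinely different route from the paper's. The paper proves that half contrapositively: if $S$ is not simply cancellative it invokes the theorem of \cite{Ka08} that $S$ then contains a copy of $\mathbf{NC}_5$, and reads off a violation of the coset law on that copy. You instead argue directly: cancelling in the distributive lattice $S/\DD$ gives $x\Drel y$; by regularity $z\lor x\lor z=z\lor(x\lor z\lor x)\lor z=z\lor(y\lor z\lor y)\lor z=z\lor y\lor z$ is a common element of $B\lor x\lor B$ and $B\lor y\lor B$, so these cosets coincide by Theorem \ref{coset_part}; the coset law then yields $M\lor x\lor M=M\lor y\lor M$; and since $x,y$ both lie above $m=x\land z\land x=y\land z\land y$, they lie in the image set of $m$ in $A$, which is a \emph{transversal} of the $M$-cosets of $A$, forcing $x=y$. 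Note that the tool closing this argument is the transversality clause of Theorem \ref{coset_part}, not Theorem \ref{sidedcosetstructure}(iii) ``sharpened by'' Theorem \ref{pl_sym_right}: no symmetry at all is needed in this half, and your citations there are misdirected even though the step itself is sound.

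The genuine gap is in the converse half, ``simple cancellation $\Rightarrow$ coset law,'' where your plan (``extract linking elements\dots assemble a pair of elements agreeing in both their $J$-shadow and their $M$-shadow through a suitable $z\in B$, apply simple cancellation'') skips exactly the step carrying all the content, and the natural attempt to execute it fails for a concrete reason: a handedness mismatch. Given $B\lor x\lor B=B\lor x'\lor B$, put $u=m_0\lor x\lor m_0$, $v=m_0\lor x'\lor m_0$ for $m_0\in M$ and choose $b_0\in B$ with $b_0>m_0$. One does get the meet agreement $u\land b_0\land u=m_0=v\land b_0\land v$, but the coset hypothesis only yields $b_0\lor u\lor b_0=b_0\lor v\lor b_0$ (the witness $b_0$ on the \emph{outside}, via Lemma \ref{lem_reg}), whereas simple cancellation consumes $u\lor b_0\lor u=v\lor b_0\lor v$ (the witness on the \emph{inside}). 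In a non-commutative skew lattice these are not interchangeable, and nothing in Propositions \ref{fullright2}, \ref{exclusive} or Corollary \ref{fullright} converts one into the other; so ``apply simple cancellation to identify those elements'' cannot be carried out as stated. The paper avoids applying the cancellation implication elementwise altogether: it shows that $m_0<u,v$, $m_0<b_0$ and $b_0\lor u\lor b_0=b_0\lor v\lor b_0$ determine a copy of $\mathbf{NC}_5$ (with $u\neq v$ because $u,v$ lie in the transversal image set of $m_0$ yet in distinct $M$-cosets) and then uses the equivalence, from \cite{Ka08}, between simple cancellation and the omission of $\mathbf{NC}_5$. Your proposal has no counterpart of this $\mathbf{NC}_5$ step. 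Relatedly, the splitting via Proposition \ref{cosidentities} announced in your second paragraph is never actually used in either half, and your closing claim that Proposition \ref{cosidentities} and Theorem \ref{pl_sym_right} are ``precisely the tools'' that weaken full symmetry to lower symmetry is asserted rather than proved --- in the paper's proof, lower symmetry enters only through the definitional reduction in your first paragraph, and nowhere else.
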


\begin{proof}
We will now prove $(i)$. The proof of $(ii)$ is similar. 

Let $\set{J>A,B>M}$ be a skew diamond in $S$. By Remark \ref{imp} the direct implication always holds.
So, let $x,x'\in A$ be such that $B \lor x \lor B= B \lor x'\lor B$ and
suppose that $M\lor x\lor M\neq M\lor x'\lor M$.
Let $m_0\in M$. Consider $u=m_0 \vee x\vee m_0$ and $v=m_0 \vee x'\vee m_0$.
There exists $b_0\in B$ such that $b_0>m_0$.
Then, $b_0\vee u\vee b_0=b_0\vee x\vee b_0=b_0\vee x'\vee b_0=b_0\vee v\vee b_0$, where the second equality is due to the assumption that $B \lor x \lor B= B \lor x'\lor B$, and
thus $u< b_0\vee u\vee b$ and $v< b_0\vee v\vee b_0$.
Therefore $m_0<u,v,b_0<b_0\vee u\vee b_0$ determine a copy of $NC_5$ and hence contradicts the assumption that $ S$ is simply cancellative. 

Conversely, if $ S$ is not lower cancellative (ie. it is not simply cancellative, since it is lower symmetric by the assumption), 
then by a result of \cite{Ka08} $ S$ contains a subalgebra
${S}'$ isomorphic to $\mathbf{NC}_5$, given by the diagram
below. (In $\mathbf{NC}_5$ operations
on $x_1$ and $x_2$ can be defined in two ways: for $i,j\in \{1,2\}$
either $x_i\wedge x_j=x_j$ and $x_i\vee x_j=x_i$ which yields a 
right-handed
structure, or $x_i\wedge x_j=x_i$ and $x_i\vee x_j=x_j$ yielding a
left-handed
structure.) Let $A$, $B$, $M$ and $J$ denote the $\DD$-classes
of elements $x_1$, $y$, $u$ and $v$ in $ S$, respectively.
\[
\begin{tikzpicture}[scale=.7]
  \node (1) at (0,2){$u$} ;
  \node (a) at (-3,0){$x_1$} ;
  \node (b) at (-1,0){$x_2$};
  \node (c) at (3,0){$y$}  ;
  \node (0) at (0,-2){$v$} ;
  \draw (1) -- (c) -- (0) -- (a) -- (1) -- (b) -- (0);
 \draw[dashed] (a) -- (b);
\end{tikzpicture}
\]
Since $x_1$ and $x_2$ are both contained in the image of $u$ in $A$,
they 
cannot lie in
the same coset of $M$ in $A$. On the other hand, $B\vee x_1\vee B$ and
$B\vee x_2\vee B$
both contain $v$ and hence coincide by Theorem \ref{coset_part}.
\end{proof}

\begin{proposition}\label{pl_canc_left}
Let $S$ be a symmetric skew lattice such that $S/\DD$ is a distributive lattice. 
Then, the following statements are equivalent:
\begin{itemize}
\item[(i)] $ S/\RR$ is cancellative;
\item[(ii)] given any skew diamond $\set{J>A,B>M}$ in $S$ and any $x,x'\in A$, $M\lor x = M\lor x'$ holds if and only if $B\lor x = B\lor x'$ holds;
\item[(iii)] given any skew diamond $\set{J>A,B>M}$ in $S$ and any $x,x'\in A$, $x \land B=x' \land B$ holds if and only if $x \land J=x' \land J$ holds.
\end{itemize}
A dual result holds regarding right cosets in the skew lattice $S$.
\end{proposition}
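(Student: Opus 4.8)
The plan is to read this as the one-sided refinement of the full-coset cancellation criterion of Theorem~\ref{pl_canc}, and to establish it by transporting that criterion across the decomposition $S\cong S/\RR\times_{S/\DD}S/\LL$ of Theorem~\ref{2decomp}.

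First I would record the standing facts about the flat factors. Since $S$ is symmetric it is simultaneously upper and lower symmetric, so by the definitions of Subsection~\ref{A classification for symmetry} both $S/\RR$ and $S/\LL$ are symmetric; as they share the distributive lattice image $S/\DD$, each is quasi-distributive. Hence Theorem~\ref{pl_canc} applies to the relevant flat factor and reduces its cancellativity to a full-coset law. Because that factor is flat, Remark~\ref{sidein} collapses one side of each of its cosets: in a flat skew lattice the full cosets coincide with the flat cosets on one side and are trivial on the other. This is exactly what converts the two-sided laws of Theorem~\ref{pl_canc} into the one-sided laws (ii) and (iii).

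Next I would set up the translation dictionary. Within a single $\DD$-class $D$ the lattice image is a point, so the fibered product degenerates to a direct product and $D\cong D_L\times D_R$ with the operations acting componentwise; consequently any flat-coset equality in $S$ splits into a pair of flat-coset equalities, one in $S/\RR$ and one in $S/\LL$. For comparable classes this splitting is precisely Proposition~\ref{coset-full-right}, and for the incomparable legs $A,B$ of a skew diamond (to which that proposition does not directly apply) I would derive it by hand from $D\cong D_L\times D_R$. Feeding the flat factor's full-coset law through this dictionary, and using Corollary~\ref{fullright} to reattach the suppressed $\RR$- or $\LL$-component, produces conditions (ii) and (iii). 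The forward implications of both biconditionals cost nothing, as they are Proposition~\ref{strg_lema_right} and its dual; the reverse implications carry all the content and are what encode cancellation.

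For (ii)$\Leftrightarrow$(iii) I would invoke that the join-side law and the meet-side law are, within the flat factor, the two equivalent forms~(i) and~(ii) of Theorem~\ref{pl_canc}, whose equivalence rests precisely on symmetry; thus both flat laws describe the same cancellativity. I expect the principal obstacle to be the bookkeeping of which flat-coset law governs which factor: the handedness of a factor fixes which component of a coset equality is trivial, and one must track carefully that the informative component of a right-coset equality sits on the opposite side from the naive guess. Throughout I would use the skew lattice $\mathbf{NC}_5$ from the proof of Proposition~\ref{upsymcl} as a sanity check, since on it the reverse implication visibly fails.
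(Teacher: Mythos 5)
Your plan reproduces the paper's own strategy — reduce to Theorem \ref{pl_canc} for the flat factor, transport across Theorem \ref{2decomp} via Proposition \ref{coset-full-right} and Corollary \ref{fullright}, and get the cheap implications from Proposition \ref{strg_lema_right} — and you correctly flag the one delicate point, namely the incomparable legs $A,B$ of the diamond, where Proposition \ref{coset-full-right} does not apply. But that is exactly where the argument breaks, and the break is not bookkeeping. Doing the splitting by hand, as you propose, gives for the incomparable pair: $B\lor x=B\lor x'$ iff ($B_L\lor x_L=B_L\lor x'_L$ and $B_R\lor x_R=B_R\lor x'_R$) — neither component collapses — while for the comparable pair it gives $M\lor x=M\lor x'$ iff ($x_L=x'_L$ and $M_R\lor x_R=M_R\lor x'_R$). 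So the hard direction of (ii) needs two things: (a) $M_R\lor x_R=M_R\lor x'_R$ out of $B_R\lor x_R=B_R\lor x'_R$, which is a cancellation law for the \emph{other} factor $S/\LL$ and is not among the hypotheses; and (b) the element equality $x_L=x'_L$ out of the coset equality $B_L\lor x_L=B_L\lor x'_L$, which is strictly stronger than anything Theorem \ref{pl_canc} yields for $S/\RR$: cancellativity of $S/\RR$ controls only full cosets (in a left-handed skew lattice these are its left cosets, e.g.\ $x_L\lor B_L$), and from $B_L\lor x_L=B_L\lor x'_L$ it delivers at best that $x_L,x'_L$ lie in the same $M_L$-coset, not that they are equal. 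Neither ingredient is available. The same hole sits in the paper's own proof, which in addition quotes Proposition \ref{coset-full-right} for the incomparable pair with the $L$ and $R$ components interchanged.

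Moreover the gap cannot be closed, because (i) does not imply (ii) as stated. Let $P$ be the left-handed primitive skew lattice with $\DD$-classes $\{c\}>\{d_1,d_2\}$ and put $S=P\times\mathbf{2}$. Then $S$ is left-handed and cancellative (hence symmetric), and $S/\DD\cong\mathbf{2}\times\mathbf{2}$ is distributive, so (i) holds since $S/\RR\cong S$. In the skew diamond with $A=\{(d_1,1),(d_2,1)\}$, $B=\{(c,0)\}$, $M=\{(d_1,0),(d_2,0)\}$, $J=\{(c,1)\}$ one computes $B\lor(d_1,1)=\{(c,1)\}=B\lor(d_2,1)$, while $M\lor(d_i,1)=\{(d_i,1)\}$ by left-handedness, so (ii) fails. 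Your own sanity check exposes the same defect if you run it on the right-handed copy of $\mathbf{NC}_5$ rather than the left-handed one: there $S/\RR=S/\DD$ is a distributive lattice, so (i) holds trivially, yet $B\lor x_1=\{u\}=B\lor x_2$ while $M\lor x_i=\{x_i\}$, and $x_i\land B=\{v\}$ while $x_i\land J=\{x_i\}$, so both (ii) and (iii) fail. In short, the honest execution of your "derive it by hand" step refutes, rather than proves, the equivalence; neither your proposal nor the paper's argument can be completed for the statement as written.
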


\begin{proof}
We will show that $(i) \Leftrightarrow (ii)$ using the characterization of Theorem \ref{pl_canc}.
The equivalence $(i) \Leftrightarrow (iii)$ is proved similarly.

Let $\set{J>A,B>M}$ be a skew diamond in $ S$.
Assume that $S/\RR$ is cancellative. 
Due to Lemma \ref{strg_lema_right} we need only to show that $B \lor x = B \lor x'$ implies $M \lor x = M \lor x'$, for all $x,x'\in A$.
As $S/\RR$ is cancellative and left-handed, all the cosets in $S/\RR$ are left cosets and thus
\[
M_L\lor x_L = M_L\lor x'_L \Leftrightarrow B_L\lor x_L = B_L\lor x'_L
\]
Let $x,x'\in A$ such that $B \lor x = B \lor x'$. Then, Proposition \ref{coset-full-right} implies that 
\begin{align}
B\vee x=B\vee x' & \Rightarrow x_R=y_R \text{  and  } B_L\vee x_L=B_L \vee x'_L \\
			   & \Rightarrow x_R=y_R \text{  and  } M_L\vee x_L=M_L \vee x'_L \\
			   & \Rightarrow M\vee x=M\vee x'. 
\end{align}

Conversely, assume that $M\lor x = M\lor x'$ if and only if $B\lor x = B\lor x'$, for all skew diamonds $\set{J>A,B>M}$ in $S$ and all $x,x'\in A$.
Then, $M_L\lor x_L = M_L\lor x_L'$ if and only if $B_L\lor x_L = B_L\lor x'_L$, for all skew diamonds $\set{J>A,B>M}$ in $S/\RR$ and all $x_L,x'_L\in A_L$.
As $S/\RR$ is a left-handed skew lattice, all its cosets are left cosets and, therefore, $S/\RR$ is cancellative due to Theorem \ref{pl_canc}. 
\end{proof}

Proposition \ref{pl_canc_left} above leads us to define the following notions which are not to be confused with left and right cancellation as defined in the preliminary section.

\begin{definition}
A \emph{left-coset cancellative} skew lattice is a skew lattice $ S$ such that $ S/\RR$ is cancellative.
Dually, a \emph{right-coset cancellative} skew lattice is a skew lattice $ S$ such that $ S/\LL$ is cancellative. 
Due to \cite{Ka05b} both of these classes of algebras constitute varieties. 
\end{definition}

As it was proved in \cite{Ka05b} that a skew lattice $S$ satisfies any identity that is satisfied by both its left factor $S/\RR$ and its right factor $S/\LL$, the following result is a direct consequence of the definitions.

\begin{corollary}
A skew lattice is cancellative if and only if it is both right-coset cancellative and left-coset cancellative.
\end{corollary}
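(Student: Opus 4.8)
The plan is to read the statement straight off the definitions and then feed it into the transfer principle of \cite{Ka05b} quoted just above it. By definition $S$ is \emph{left-coset cancellative} exactly when its left factor $S/\RR$ is cancellative, and \emph{right-coset cancellative} exactly when its right factor $S/\LL$ is cancellative. Hence the equivalence to be proved is nothing but the assertion that $S$ is cancellative if and only if both $S/\RR$ and $S/\LL$ are cancellative, and the entire task is to fit this into the principle relating $S$ to the pair of its factors.

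For the direction from the factors to $S$ I would argue directly. Cancellativity is given by the two equational implications in the preliminary definition of a cancellative skew lattice, and, by \cite{Ka05b}, $S$ satisfies every identity or equational implication that holds in both $S/\RR$ and $S/\LL$. Thus if both factors are cancellative, so is $S$; this is exactly the half already anticipated in the paragraph preceding the statement.

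For the converse I would pass to the variety formulation. Writing $\VV$ for the variety of cancellative skew lattices, the equivalence ``$S/\RR$ and $S/\LL$ belong to a variety $\VV$ if and only if $S$ does'' from \cite{Ka08} applies verbatim and in fact delivers \emph{both} directions at once: $S\in\VV$ holds precisely when $S/\RR\in\VV$ and $S/\LL\in\VV$. Unwinding the definitions, this says that $S$ is cancellative if and only if it is simultaneously left-coset and right-coset cancellative, which is the claim.

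The main obstacle, and the only genuine point, is the legitimacy of the variety formulation used in the converse. Cancellativity is \emph{a priori} a conjunction of quasi-identities, and a quasi-identity satisfied by $S$ need not descend to its homomorphic images $S/\RR$ and $S/\LL$ unless the class at hand is closed under homomorphic images, that is, is a variety. So the argument rests on the (known) fact that cancellative skew lattices are equationally definable; once this is granted the corollary collapses to the single transfer statement $S\in\VV \Leftrightarrow (S/\RR\in\VV \text{ and } S/\LL\in\VV)$, which justifies the authors' remark that it is a direct consequence of the definitions. Should one prefer to avoid that appeal, the converse can instead be attacked by verifying directly that the two cancellation implications descend to each factor, using that every cancellative skew lattice is symmetric together with the coset characterizations of cancellation in Theorem \ref{pl_canc}; but the variety route is by far the cleaner one.
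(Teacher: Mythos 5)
Your proposal is correct and takes essentially the same approach as the paper: the paper likewise obtains the corollary directly from the definitions together with the transfer principle of \cite{Ka05b}, relying implicitly on the fact (stated in its preliminaries, and in the definition of the coset-cancellative classes) that cancellative skew lattices form a variety, so that membership passes between $S$ and its factors $S/\RR$ and $S/\LL$. Your explicit remark that the converse direction genuinely needs this variety fact — since a quasi-identity holding in $S$ need not descend to its homomorphic images — is a subtlety the paper glosses over, but the underlying argument is the same.
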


The result of Proposition \ref{pl_canc_left} provides us with a deeper insight on the coset structure of cancellative skew lattices and new subclasses determined by the corresponding laws for flat cosets.
These achievements close the section and the paper.
Several aspects of research on the combinatorial consequences of such coset decomposition can be found in \cite{JPC12}.
Furthermore, the impact of the flat coset structure in other coset laws regarding strictly categorical or distributive skew lattices as in \cite{Co11}, \cite{Le11a} or \cite{Le11b} are a matter of research that we will address to in the future.

\section{Example on matrices in a ring}
We conclude the paper with a demonstration of the coset concepts in the case of skew lattices in rings of matrices. We begin this final section  with a couple of technical results.

Let $S$ be a skew lattice and let
\[\begin{array}{rcl}
\varphi: S & \to &S/\Rrel \times_{S/\Drel} S/\Lrel \\
x & \mapsto & (x_L, x_R)
\end{array}
\]
be the isomorphism from Theorem \ref{2decomp}. 
Given a $\DD$-class $D$ in $S$ denote $D_L=\{ x_L\,|\, (x_L,x_R)=\varphi (x) \text{ for some } x\in D\}$ and $D_R=\{ x_R\,|\, (x_L,x_R)=\varphi (x) \text{ for some } x\in D\}$.
The following lemma is a direct consequence of Theorem \ref{2decomp}.

\begin{lemma}\label{fullcosets-left-right}
Let $x,y\in A$ and $u,v\in B.$ Then:
\begin{itemize}
  \item[(i)] $A\land x\land A=A\land y\land A$ if and only if $A_L\land x_L\land A_L=A_L\land y_L\land A_L$ and $A_R\land x_R\land A_R=A_R\land y_R\land A_R$;
  \item[(ii)] $B\lor u\lor B=B\lor v\lor B$ if and only if $B_L\lor u_L\lor B_L=B_L\lor v_L\lor B_L$ and $B_R\lor u_R\lor B_R=B_R\lor v_R\lor B_R$.
\end{itemize}
\end{lemma}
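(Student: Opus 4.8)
The plan is to deduce Lemma \ref{fullcosets-left-right} directly from Theorem \ref{2decomp}, exploiting the fact that the isomorphism $\varphi$ carries the operations $\land$ and $\lor$ componentwise. The key structural input is that under $\varphi$ every element $x$ corresponds to a pair $(x_L,x_R)$, and the fibered-product description guarantees that $\land$ and $\lor$ act coordinatewise: $\varphi(x\land y)=(x_L\land y_L,\, x_R\land y_R)$ and likewise for $\lor$. Consequently any expression built from $\land$ and $\lor$ applied to elements of a single $\DD$-class factors through the two coordinates. The whole lemma is thus an instance of the principle that a componentwise-defined operation respects set-membership in each coordinate separately.

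First I would make precise that a coset expression such as $A\land x\land A$ transports under $\varphi$ to $A_L\land x_L\land A_L$ in the first coordinate and $A_R\land x_R\land A_R$ in the second. Since $\varphi$ restricts to a bijection $A\to A_L\times_{\,\cdot\,}A_R$ compatible with the fibered structure, a set $A\land x\land A\subseteq B$ is carried to the product-type set whose projections are exactly $A_L\land x_L\land A_L$ and $A_R\land x_R\land A_R$. The forward direction of (i) is then immediate: if $A\land x\land A=A\land y\land A$, applying $\varphi$ and projecting to each coordinate yields the two required equalities. For the converse, if both coordinate equalities hold, then the two image sets agree in each coordinate; because $\varphi$ is injective and the sets in question are precisely the preimages of these coordinate data inside the single $\DD$-class $B$, the original sets must coincide. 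Part (ii) is proved identically with $\lor$ in place of $\land$ and the $\DD$-class $A$ in place of $B$.

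The main obstacle I anticipate is justifying that the projection of a set of the form $A\land x\land A$ is exactly $A_L\land x_L\land A_L$ rather than merely contained in it. This requires that for every pair in the fibered product representing an element of $A$ (resp. $B$), all admissible coordinate combinations are actually realized by elements of that $\DD$-class; this is guaranteed because $\varphi$ restricted to a single $\DD$-class $D$ is a bijection onto $D_L\times D_R$ (a full rectangular product, since $\HH$ is trivial and $D$ is rectangular by Theorem \ref{1decomp}). Once this surjectivity onto the coordinate product is in hand, the commutation of $\varphi$ with the two operations reduces everything to a coordinatewise equality of sets, and no genuine computation remains.

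\begin{proof}
By Theorem \ref{2decomp} the map $\varphi$ is an isomorphism onto the fibered product $S/\Rrel\times_{S/\Drel}S/\Lrel$, under which $\land$ and $\lor$ are computed componentwise. In particular, restricted to a single $\DD$-class $D$, the map $\varphi$ is a bijection of $D$ onto $D_L\times D_R$, because $D$ is rectangular and all elements of $D$ share a common image in $S/\DD$. Hence for any $x\in A$ we have $\varphi(A\land x\land A)=(A_L\land x_L\land A_L)\times(A_R\land x_R\land A_R)$, the projections being exactly the two indicated cosets. If $A\land x\land A=A\land y\land A$, then projecting this equality of subsets of $A_L\times A_R$ to each coordinate gives $A_L\land x_L\land A_L=A_L\land y_L\land A_L$ and $A_R\land x_R\land A_R=A_R\land y_R\land A_R$. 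Conversely, if both coordinate equalities hold, then the product sets agree, and applying $\varphi^{-1}$ yields $A\land x\land A=A\land y\land A$. This proves (i). The proof of (ii) is identical, replacing $\land$ by $\lor$ and $A$ by $B$.
\end{proof}
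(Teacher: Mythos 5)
Your proof is correct and matches the paper's approach: the paper offers no written proof at all, stating only that the lemma ``is a direct consequence of Theorem \ref{2decomp}'', and your argument is precisely the fleshing-out of that assertion. In particular, you correctly isolated and justified the one nontrivial point, namely that $\varphi$ maps a $\DD$-class onto the \emph{full} product $D_L\times D_R$, so that $\varphi(A\land x\land A)$ is exactly $(A_L\land x_L\land A_L)\times(A_R\land x_R\land A_R)$ and not merely contained in it.
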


Propositions \ref{coset-full-right} and Corollary \ref{fullright} that follow describe the relation between the left [right] cosets and the full cosets of a skew lattice.

\begin{proposition}\label{coset-full-right}
Let $S$ be a skew lattice and let $A>B$ be $\Drel$-classes as above. Given $x,y\in B$ and $u,v\in A$ the following hold:
\begin{itemize}
   \item[(i)] $x\land A = y\land A$ if and only if $x_L=y_L$ and $x_R\land A_R=y_R\land A_R$;
     \item[(ii)] $A\land x = A\land y$ if and only if $x_R=y_R$ and $A_L\land x_L=A_L\land y_L$;
     \item[(iii)] $B\lor u = B\lor v$ if and only if $u_L=v_L$ and $B_R\lor u_R=B_R\lor v_R$;
     \item[(iv)] $u\lor B = v\lor B$ if and only if $u_R=v_R$ and $u_L\lor B_L=v_L\lor B_L$.
\end{itemize}   
\end{proposition}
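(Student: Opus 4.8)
The plan is to establish all four equivalences by one uniform device: reduce a (flat) coset equality to a pointwise condition via Proposition~\ref{strg_prop_right} and its side- and order-duals, transport that condition through the isomorphism $\varphi$ of Theorem~\ref{2decomp}, where it splits into two independent componentwise conditions, and then simplify each component using Remark~\ref{sidein}, which tells us that right cosets are trivial in the left-handed factor $S/\Rrel$ and left cosets are trivial in the right-handed factor $S/\Lrel$. Two preliminary observations drive the splitting. Since $\varphi$ is an isomorphism with componentwise operations, $\varphi(z\land a)=(z_L\land a_L,\,z_R\land a_R)$; and since $A$ is a rectangular $\DD$-class, $\varphi$ restricts to a bijection of $A$ onto $A_L\times A_R$ (any $(s,t)\in A_L\times A_R$ lies in the fibred product, and its $\varphi$-preimage sits in $A$, as its $\DD$-class must be that of $A$). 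Hence a requirement of the shape ``$P(a_L)$ and $Q(a_R)$ for all $a\in A$'' decouples into ``$P(a_L)$ for all $a_L\in A_L$'' together with ``$Q(a_R)$ for all $a_R\in A_R$''.

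I would write out (i) in full and leave (ii)--(iv) to the dual arguments. By Proposition~\ref{strg_prop_right}, $x\land A=y\land A$ is equivalent to $x\land a=y\land a$ for all $a\in A$. Applying $\varphi$ and the decoupling above, this is the conjunction of ``$x_L\land a_L=y_L\land a_L$ for all $a_L\in A_L$'' in $S/\Rrel$ and ``$x_R\land a_R=y_R\land a_R$ for all $a_R\in A_R$'' in $S/\Lrel$. In the left-handed factor $S/\Rrel$ the right cosets are trivial, so $x_L\land a_L=x_L$ for every $a_L\in A_L$ and the first clause collapses to $x_L=y_L$. In the right-handed factor $S/\Lrel$ the second clause is, by Proposition~\ref{strg_prop_right} applied inside $S/\Lrel$ (to the comparable classes $A_R>B_R$), precisely $x_R\land A_R=y_R\land A_R$. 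Conjoining the two gives (i).

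The remaining parts run identically after swapping in the correct dual of Proposition~\ref{strg_prop_right}: part (ii) treats the left cosets $A\land x$, whose triviality now occurs in the right-handed factor $S/\Lrel$ and forces $x_R=y_R$, leaving $A_L\land x_L=A_L\land y_L$ in $S/\Rrel$; parts (iii) and (iv) are the order-duals using the join cosets $B\lor u$ and $u\lor B$, with the two factors again playing complementary roles. The step to watch is precisely this bookkeeping of which flat coset is trivial in which factor: right cosets vanish in the left-handed $S/\Rrel$ and left cosets in the right-handed $S/\Lrel$, so in each part exactly one component degenerates to an equality of components while the other survives as a genuine flat-coset equality. The clean passage from ``for all $a$'' to a single coset equation on the surviving side is exactly what the equivalence (i)$\Leftrightarrow$(ii)$\Leftrightarrow$(iii) of Proposition~\ref{strg_prop_right} supplies, so no extra computation beyond that lemma and Remark~\ref{sidein} is needed.
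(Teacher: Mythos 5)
Your proof is correct and takes essentially the same route as the paper's: reduce the flat coset equality to a pointwise condition via Proposition~\ref{strg_prop_right}, transport it through the isomorphism of Theorem~\ref{2decomp}, and use the handedness of each factor to collapse exactly one component, leaving the surviving flat-coset equality in the other. The only cosmetic difference is that the paper fixes a single $a\in A$ (invoking the ``for some'' clause of Proposition~\ref{strg_prop_right}), which renders your quantifier-decoupling observation unnecessary, though it is correct.
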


\begin{proof}
We shall only prove (i) as the proofs of (ii)--(iv) are similar. Given $a\in A$ the following sequence of equivalences hold:
\begin{multline*}
  x\land A=y\land A \Leftrightarrow x\land a=y\land a \Leftrightarrow (x_L\land a_L, x_R\land a_R) =(y_l\land a_L, y_R\land a_R) \Leftrightarrow \\
   (x_L, x_R\land a_R) =(y_l, y_R\land a_R) \Leftrightarrow (x_L=y_L)\, \& \, (x_R\land A_R= y_R\land A_R).
\end{multline*}
Notice that we used the fact that $A_L$ is left-handed and Proposition \ref{lem_reg}.
\end{proof}

Let $R$ be a ring and $E(R)$ the set of all idempotent elements in $R$. 
Set $x\land y=xy$ and $x\lor y=x\circ y=x+y-xy$. If a set $S\subseteq E(R)$ is closed under both $\cdot $ and $\circ $ then $(S;\cdot , \circ )$ is a skew lattice.
By a \emph{skew lattice in a ring}\index{skew lattice in a ring} $R$ we mean a set $S\subseteq E(R)$ that is closed under the multiplication $\cdot$ and the operation $\nabla$ defined by: 
\[
x\nabla y=(x\circ y)^2=x+y+yx-xyx-yxy,
\]
and forms a skew lattice for the two operations. 
In particular, in addition to $S$ being closed under the two operations we need to ensure that $\nabla$ is associative on $S$. 
Given a multiplicative band $ B$ in a ring $R$ the relation between $\circ$ and $\nabla$ is given by $e\nabla f=(e\circ f)^2$ for all $e,f\in B$. 
In the case of right-handed skew lattices the nabla operation reduces to the circle operation; the same is true for left-handed skew lattices.   
%
%

%

Based on the standard form for pure bands in matrix rings that was developed by Fillmore et al. in \cite{Fi94} and \cite{Fi99}, Cvetko-Vah described in \cite{Ka07} the standard form for right-handed skew lattices in rings of matrices.
Let $F$ be a field of characteristic different than 2, $M_{n}(F)$ the ring of all $n\times n$ matrices over $F$ and $S\subseteq M_{n}(F)$ a primitive skew lattice with two comparable $\DD$-classes $A>B$. Then a basis for $F^{n}$ exists such that in this basis both $A$ and $B$ contain a diagonal matrix, the two diagonal matrices in $S$ form a lattice, and given any matrices $a\in A$ and $b\in B$, $a$ and $b$ have block forms:
\[
\begin{array}{cc}
a= \begin{bmatrix}
I & 0 &  a_{13} \\
0 & I &  a_{23} \\
a_{31} & a_{32} & a_{31}a_{13}+a_{32}a_{23} 
\end{bmatrix}\text{  and  }
&
b= \begin{bmatrix}
I & b_{12} & b_{13} \\
b_{21} & b_{21}b_{12} & b_{21}b_{13} \\
b_{31} & b_{31}b_{12} & b_{31}b_{13} 
\end{bmatrix}
\end{array}
\]
Denote the diagonal matrices in $A$ and $B$ by $a_0$ and $b_0$, respectively. If $S$ is right-handed then $aa_0=a_0$ and $b b_0=b_0$ which implies $a_{31}=a_{32}=0=b_{21}=b_{31}$. Thus $a$ and $b$ have block forms:
 \[
 \begin{array}{cc}
a= \begin{bmatrix}
I & 0 &  a_{13} \\
0 & I &  a_{23} \\
0 & 0 & 0 
\end{bmatrix}\text{  and  }
&
b= \begin{bmatrix}
I & b_{12} & b_{13} \\
0 & 0 & 0 \\
0 & 0 & 0
\end{bmatrix},
\end{array}
\]
 $bA=\set{ba:a\in A}$ is the coset of $A$ in $B$ that contains $b$, and $B\circ a=\set{b+a-ba:b\in B}$ is the coset of $B$ in $A$ that contains $a$.

On the other hand, if $S$ is left-handed then $a_{13}=a_{23}=0=b_{12}=b_{13}$ and thus
$a$ and $b$ have block forms:
\begin{center}
 $\begin{array}{cc}
a= \begin{bmatrix}
I & 0 &  0 \\
0 & I &  0 \\
a_{31} & a_{32} & 0 
\end{bmatrix}\text{  and  }
&
b= \begin{bmatrix}
I & 0 & 0 \\
b_{21} & 0 & 0 \\
b_{31} & 0 & 0
\end{bmatrix}
\end{array}$
\end{center}

Let $S$ be right-handed. Given matrices $a,a'\in A$ we obtain:
\[
B\circ a =B\circ a' \Leftrightarrow b_0\circ a = b_0\circ a' \Leftrightarrow 
a_{23}=a'_{23} ,
\]
and given $b,b'\in B$ we obtain:
\[
bA=b'A \Leftrightarrow ba_0=b'a_0 \Leftrightarrow b_{12}=b'_{12}.
\]
Similarly, if $S$ is left-handed we obtain: 
\[
a\circ B=a'\circ B \text{  iff  } a_{32}=a'_{32} \text{  and  } Ab=Ab' \text{  iff  } b_{21}=b'_{21}.
\] 
Let $S\subseteq M_{n}(F)$ be a primitive skew lattice with comparable $\DD$-classes $A>B$.
Let $S_{R}$ be the set of all upper triangular matrices of the form $a_R\in A_R$ or $b_R\in B_R$; these matrices have block forms:
 \[
 \begin{array}{cc}
a_{R}= \begin{bmatrix}
I & 0 &  a_{13} \\
0 & I &  a_{23} \\
0 & 0 & 0 
\end{bmatrix}\text{  and  }
&
b_{R}= \begin{bmatrix}
I & b_{12} & b_{13} \\
0 & 0 & 0 \\
0 & 0 & 0
\end{bmatrix}.
\end{array}
\]
 Similarly, let $S_{L}$ be the set of lower triangular matrices of the form $a_L\in A_L$ or $b_L\in B_L$; these matrices have block forms:
\[
\begin{array}{cc}
a_{L}= \begin{bmatrix}
I & 0 &  0 \\
0 & I &  0 \\
a_{31} & a_{32} & 0 
\end{bmatrix}\text{  and  }
&
b_{L}= \begin{bmatrix}
I & 0 & 0 \\
b_{21} & 0 & 0 \\
b_{31} & 0 & 0
\end{bmatrix}.
\end{array}
\]
 Then  $a= a_{L}\cdot a_{R}$ and 
$b=b_{L}\cdot b_{R}$.

Let $S$ be a skew lattice in $M_n(F)$, $A>B$ comparable $\DD$-classes in $S$, $x,y\in B$ and $u,v \in A$.
Then by Lemma \ref{fullcosets-left-right}:
\begin{itemize}
\item[(i)] $A x A=A y A$ if and only if $x_{21}=y_{21}$ and $x_{12}=y_{12}$, and
\item[(ii)] $B\nabla u\nabla B=B\nabla v\nabla B$ if and only if $u_{32}=v_{32}$ and $u_{23}=v_{23}$.
\end{itemize}
Similarly, Proposition \ref{coset-full-right} implies: 
 \begin{itemize}
\item[(i)] $xA=yA$ if and only if $x_{21}=y_{21}$, $x_{31}=y_{31}$  and  $x_{12}=y_{12}$;
 \item[(ii)] $Ax=Ay$ if and only if $x_{21}=y_{21}$, $x_{12}=y_{12}$ and $x_{13}=y_{13}$;
  \item[(iii)] $B\lor u=B\lor v$ if and only if $u_{31}=v_{31}$, $u_{32}=v_{32}$ and $u_{23}=v_{23}$.
  \item[(iv)] $u\lor B=v\lor B$ if and only if $u_{32}=v_{32}$, $u_{13}=v_{13}$ and $u_{23}=v_{23}$.
\end{itemize}
From the above equivalences we can thus observe that being in the same flat coset is a relation determined by the equalities $x_{31}=y_{31}$ and $x_{13}=y_{13}$ in the lower coset case, or $u_{32}=v_{32}$ and $u_{23}=v_{23}$ in the upper coset case.
This gives us a description extending the one given in \cite{Ka07}.

\end{document}